\newcommand{\defn}[1]{\textit{#1}}
\newcommand{\prop}[1]{{\rm (#1)}}
\begin{document}

\title[Counterexample to a Gr\"obner approach for noetherianity]{A counterexample to a Gr\"obner approach for noetherianity of the twisted commutative algebra $\Sym(\Sym^2(\mathbf{C}^\infty))$}
\date{\today}

\author{Robert P. Laudone}
\address{Department of Mathematics, University of Michigan, Ann Arbor, MI}
\email{\href{mailto:laudone@umich.edu}{laudone@umich.edu}}
\urladdr{\url{https://robertplaudone.github.io/}}

\thanks{RL was supported by NSF grant DMS-2001992.}

\begin{abstract}
We resolve an open question posed by the authors of \cite{sym2noeth} in 2015 concerning a Gr\"obner theoretic approach to the noetherianity of the twisted commutative algebra $\Sym(\Sym^2(\bC^\infty))$. We provide a negative answer to their question by producing an explicit antichain. In doing so, we establish a connection to well studied posets of graphs under the subgraph and induced subgraph relation. We then analyze this connection to suggest future paths of investigation.
\end{abstract}

\maketitle

\tableofcontents

\section{Introduction}

\subsection{Statement of results.} Recall that a \defn{twisted commutative algebra} (tca) is a commutative $\bC$-algebra with an action of $\GL_\infty$ by algebra homorphisms for which it forms a polynomial representation. In \cite{sym2noeth}, the authors prove that the twisted commutative algebra $\Sym(\Sym^2(\bC^\infty))$ is noetherian in characteristic $0$. They then propose a different method of proof and ultimately pose the question of whether a partially ordered set $(\cM,\sqsubseteq)$ is noetherian. The noetherianity of this poset would imply the noetherianity of the twisted commutative algebra $\Sym(\Sym^2(\bC^\infty))$ in any characteristic. This question has been open since 2015, the main result of this paper is providing a negative answer by constructing an infinite antichain in the poset:

\begin{theorem} \label{thm: A}
The poset $(\cM,\sqsubseteq)$ described in \cite[Question 5.2]{sym2noeth} (and \S \ref{sec: sym2grob}) is not noetherian.
\end{theorem}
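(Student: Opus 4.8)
The plan is to construct an explicit infinite antichain in $(\cM,\sqsubseteq)$. Since the poset is well-founded (each element has a finite size that strictly decreases along the order), any infinite antichain obstructs noetherianity, so producing one suffices for Theorem \ref{thm: A}. The antichain will be built from a classical infinite antichain of finite graphs, transported into $\cM$.

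The first step is to unwind the description of $\cM$ and $\sqsubseteq$ recalled in \S\ref{sec: sym2grob} from \cite[Question 5.2]{sym2noeth} and recast it combinatorially. An element of $\cM$ should be a monomial of $\Sym(\Sym^2(\bC^\infty))$ taken up to relabeling of the variables, i.e.\ a finite (multi)graph with loops; and $\mu\sqsubseteq\nu$ should mean that $\mu$ is obtainable from $\nu$ by a finite sequence of elementary moves dictated by the tca structure and the underlying combinatorial category: deleting an edge, deleting or forgetting a vertex, relabeling, and --- the moves that make the problem delicate --- identifying two vertices along a non-injective map induced by the $\GL_\infty$-action. Making this dictionary precise is where the connection advertised in the abstract appears: the aim is to show that, with the identification moves set aside, $\sqsubseteq$ lies between the subgraph and the induced-subgraph orders on graphs, while edge contraction --- which would make it the minor order --- is not available.

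With the dictionary in hand, I would set $\mu_n$ to be the element of $\cM$ corresponding to the $n$-cycle $C_n$ for $n\geq 3$, possibly lightly decorated (say with loops or doubled edges) if that is what it takes to land inside $\cM$ and to block degenerate identifications. The family $\{C_n\}_{n\geq 3}$ is a well-studied infinite antichain under both the subgraph and the induced-subgraph relations: deleting any vertex or edge of $C_n$ destroys its unique cycle and adjoining isolated vertices is no help, so $C_m$ embeds into $C_n$ in neither sense when $m\neq n$. The remaining point is that the extra identification moves of $\sqsubseteq$ also cannot turn $C_n$ into $C_m$: identifying two vertices of a cycle creates either a vertex of degree $\geq 3$ or a loop or multi-edge, and this defect persists under further moves. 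I would formalize this with a monovariant --- e.g.\ the maximum vertex degree, the number of degree-$2$ vertices, or a cyclomatic-type quantity --- that is monotone in the right direction along every elementary move and that already separates $\mu_m$ from $\mu_n$. Together with the subgraph analysis this gives $\mu_m\not\sqsubseteq\mu_n$ for all $m\neq n$, so $\{\mu_n\}_{n\geq 3}$ is the desired antichain.

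I expect the crux to be the second step: correctly and completely extracting the generating moves of $\sqsubseteq$ from \cite{sym2noeth}, and in particular controlling the identification moves coming from the $\GL_\infty$-action, which could a priori collapse a long cycle onto a short one in a way invisible to naive subgraph reasoning. This is why the argument needs a genuine invariant (or a decorated family on which such moves are provably inapplicable) rather than a finite case analysis. Granting that, the facts that cycles form an antichain for the honest subgraph-type moves and that an infinite antichain defeats noetherianity are routine, and Theorem \ref{thm: A} follows.
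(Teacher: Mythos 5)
There is a genuine gap, and it is at the foundation: you have misread what $\cM$ is and what the generating moves are. In \S\ref{sec: sym2grob} (following \cite[\S 5.3]{sym2noeth}), $\cM$ is the set of undirected \emph{matchings} on $\bN$ --- every vertex has degree at most one --- not general (multi)graphs with loops. The order $\sqsubseteq$ is generated by the Type~I moves (add an isolated edge; shift an existing edge endpoint $j\mapsto j+1$ when $j+1$ is free) and the two Type~II rearrangements pictured in \eqref{TypeIImoves}; there are no vertex-deletion or vertex-identification moves of the sort you describe. Consequently your proposed family $\mu_n = C_n$ is not even a set of elements of $\cM$: a cycle on $n\ge 3$ vertices has all degrees equal to $2$, so it is not a matching, and no ``light decoration'' can make it one. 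The invariants you suggest (maximum degree, number of degree-$2$ vertices) are vacuous on matchings, where every vertex has degree $0$ or $1$.

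The reason graph theory nevertheless enters --- and the part your outline is silently conflating --- is that the paper first restricts to \emph{intertwined} perfect matchings (where $\{1,\dots,n\}$ pairs with $\{n+1,\dots,2n\}$), identifies these with permutations of $[n]$ (Propositions \ref{prop: TypeIequiv}, \ref{prop: TypeIIequiv}, \ref{prop: matchingConnection}), and only then passes to the associated \emph{permutation graphs}. It is at that level, one removed from $\cM$ itself, that an antichain of graphs is used, and the antichain is the fork family $F_{2n}$ (trees), not cycles. Cycles do play a role, but as an invariant rather than as the antichain: Proposition \ref{prop: preserveCycles} shows that once a permutation graph acquires a cycle, Type~I and Type~II moves cannot destroy it, which (together with Proposition \ref{prop: moreEdges}, that Type~II moves strictly increase the edge count) forces the fork structure to be rigid. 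So the kernel of your instinct --- find a classical graph antichain and a monovariant protected by the moves --- is in the right spirit, but as stated it cannot be executed inside $\cM$; you need the matching $\to$ permutation $\to$ permutation-graph translation before any graph-theoretic antichain is available, and once there, the cycles you wanted to use as the antichain become the obstruction, while the trees become the antichain.
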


To construct this counterexample, we establish a connection to graph theory that, to our knowledge, has not been seen before in investigating noetherianity results about twisted commutative algebras. Up to this point, all of the noetherianity results in this vein have relied on some variant of Higman's lemma, which one can view as a ``one dimensional'' result in that it is concerned with words. The use of graph theory can be seen as an application of ``higher dimensional'' combinatorics.  We believe such a connection will be necessary if one wishes to use Gr\"obner and combinatorial methods to approach noetherianity of higher degree twisted commutative algebras.

\subsection{Motivation.} Recently, researchers have discovered many large algebraic structures that have surprising finiteness properties up to natural symmetries. Examples include ${\bf FI}$ \cite{fimodule}, $\cH$ \cite{heckeCat}, as well as the collection of Veronese \cite{Veronese} and Pl\"ucker ideals \cite{Plucker}. Twisted commutative algebras are another class of examples, but are still largely not understood; see \S \ref{subsec: tca} for the general definition. In the setting of these algebraic structures, we often consider sequences of modules $M_n$ that are ``compatible'' in a certain sense and the finiteness properties we seek are some sort of stabilization as $n$ gets large. In all of these cases, one of the most important finiteness properties is noetherianity. For a tca $A$, there is a notion of a finitely generated $A$-module, and $A$ is said to be \defn{noetherian} if any submodule of a finitely generated $A$-module is also finitely generated.

All degree $1$ tca's are easily seen to be noetherian, for more information on tca's and the proof of this fact we refer the reader to \cite{introtca}. In fact, modules over the tca $\Sym(\bC^\infty)$ are equivalent to the ${\bf FI}$-modules of \cite{fimodule} under Schur-Weyl duality. As soon as one starts to consider tca's generated in degree larger than one, much less is known. Indeed, only six degree two tca's are known to be noetherian, see \cite{sym2noeth,periplectic,spnoeth} for details.

All of these results stem from a similar idea. Namely, one studies the torsion elements in the category of modules for the tca as well as the generic category, which is the Serre quotient by the torsion subcategory. One then investigates how both of these pieces glue together to deduce noetherianity. Although the idea is similar in all cases, the execution is often specific to the example, involved and characteristic dependent. 

Draisma was able to show that all tca's finitely generated in any degree are topologically noetherian, i.e. radicals satisfy the ascending chain condition \cite{draisma}. One could hope that a similar result holds algebraically. This is actually one of the major open problems in the theory of tca's. As of now, we seem far from proving something this strong, and to get there we may need to seek other methods of proof that are more easily generalizable.

In \cite{sym2noeth}, the authors suggest a potential step in that direction, namely trying to apply Gr\"obner methods for proving noetherianity. This method already has the benefit of being independent of characteristic. These Gr\"obner techniques are successfully applied in \cite{Veronese,Plucker,heckeCat,grob}.

Such an approach also works for simple examples of tca's, for example $\Sym(\bC^{\infty} \oplus \bC^{\infty})$, and ultimately boils down to an application of Higman's lemma, but in degree $2$ more complications arise. We include the details of the $\Sym(\Sym^2(\bC^{\infty}))$ case in \S \ref{sec: sym2grob} but refer the reader to \cite[Section 5]{sym2noeth} for details about the degree $1$ case. After outlining this more combinatorial approach to noetherianity of $\Sym(\Sym^2(\bC^\infty))$, the authors in \cite{sym2noeth} end with the question of whether a poset they construct is noetherian, which would ultimately imply noetherianity of $\Sym(\Sym^2(\bC^\infty))$. In answering this question, even negatively, we hope to provide potential paths for further research in this direction, as well as motivation to revisit this Gr\"obner approach.

\subsection{Idea behind the proof.} The proof of Theorem \ref{thm: A} relies on connecting the poset $(\cM,\sqsubseteq)$ of matchings ordered under certain allowable moves to the poset of permutation graphs ordered by what these allowable moves induce on the underlying permutations.

To do this, we first restrict to a subset of all perfect matchings which we can connect to words. We then interpret the partial order on perfect matchings in terms of their word counterparts  (Propositions \ref{prop: TypeIequiv}, \ref{prop: TypeIIequiv}). We can further view these words as word representations of permutations (Proposition \ref{prop: matchingConnection}). We then consider the corresponding permutation graphs. We label a well-known antichain of graphs for the subgraph relation, proving these graphs are permutation graphs and associating to each a permutation. We then argue that these permutations provide an antichain for $(\cM,\sqsubseteq)$ by studying what happens to the graphs as we change the corresponding permutations. This allows us to construct an infinite antichain in our original poset.

\subsection{Outline.} In \S \ref{sec: background} we provide all the relevant background material on tca's, representations of $\GL$, and graph theory. In \S \ref{sec: sym2grob} we recall the setup from \cite[\S 5.3]{sym2noeth} for the question we answer. In \S \ref{sec: settingUp}, we describe all the necessary setup for the counterexample. This is the section where we establish the connection between the poset of perfect matchings ordered by certain allowable moves and the poset of permutations and their corresponding permutation graphs. In \S \ref{sec: counterEx} we use the connection in the previous section to construct an explicit counterexample, showing the poset $(\cM,\sqsubseteq)$ is not noetherian. Finally, in \S \ref{sec: goingForward} we outline future directions of research we are actively investigating that could stem from the techniques established in this paper to prove noetherianity of higher degree tca's via Gr\"obner methods.

\subsection{Acknowledgments.} We thank Rohit Nagpal, Steven Sam and Andrew Snowden for posing this question. We especially thank Steven and Andrew for many helpful conversations about this topic.

\section{Background} \label{sec: background}

\subsection{Important Definitions.}\label{subsec: tca} By $\GL_\infty$, we mean $\bigcup_{n \geq 1} \GL_n$. A representation of $\GL_\infty$ is \defn{polynomial} if it is a subquotient of a possibly infinite direct sum of representations of the form $(\bC^\infty)^{\otimes k}$. Polynomial representations of $\GL_\infty$ are semi-simple and all the simple modules are indexed by partitions. That is, the simple modules are precisely $\bS_\lambda(\bC^\infty)$, where $\bS_\lambda$ is the Schur functor associated to the partition $\lambda$. A polynomial representation is said to be \defn{finite length} if it is a direct sum of finitely many simple representations. We refer the reader to \cite{introtca} for details.

A \defn{twisted commutative algebra} (tca) is a commutative unital $\bC$-algebra $A$ equipped with an action of $\GL_\infty$ by $\bC$-algebra homomorphisms such that $A$ forms a polynomial representation of $\GL_\infty$.

\subsection{Admissible weights.}\label{subsec: admissibleWeights} A weight of $\GL_\infty$ is a sequence of non-negative integers $w = (w_1,w_2,\dots)$ such that $w_i = 0$ for $i \gg 0$. The classical results about weight space decomposition of polynomial representations for $\GL_n$ carry over to the infinite setting. Namely, if $V$ is any polynomial representation of $\GL_\infty$ then we have $V = \bigoplus_{w} V_w$ where $V_w$ is the weight space of weight $w$. A weight $w$ is \defn{admissible} if all the $w_i$ are either $1$ or $0$. An \defn{admissible weight vector} is an element of $V_w$ where $w$ is an admissible weight. We will make use of the following fact: if $V$ is a polynomial representation of $\GL_\infty$ then $V$ is generated, as a representation, by its admissible weight vectors.

\subsection{Permutation Graphs.}\label{sec: permGraphs} We assume the reader has a basic background in graph theory and combinatorics. For a permutation $\sigma$ of $n$, we define the \defn{permutation graph} $G_\sigma$ to have vertex set $V = [n]$ and edge set $E(G) = \{(i,j) \; | \; i < j, \sigma(i) > \sigma(j)\}$. A permutation graph is not a directed graph, but following the convention of Kho and Ree we write an edge $(i,j) \in E$ with $i < j$, i.e. as an ordered pair instead of writing $\{i,j\} \in E$. We call such a pair $(i,j)$ with $i < j$ but $\sigma(i) > \sigma(j)$ an \defn{inversion} in $\sigma$. When we focus on a single element $i$, we say that another element $j$ is an inversion with $i$ if $(i,j)$ is an inversion.

Not every graph is a permutation graph. Koh and Ree in \cite[Theorem 3.2]{permGraph} showed that permutation graphs are completely characterized by the following properties:
\begin{itemize}
\item[\prop{P1}\label{P1}] $E$ is transitive, i.e., if $(i,j) \in E$ and $(j,k) \in E$, then $(i,k) \in E$.
\item[\prop{P2}\label{P2}] If $(i,k) \in E$ and $i < j < k$ for some $j$, then it must hold that $(i,j) \in E$ of $(j,k) \in E$.
\end{itemize}

This characterization allows one to show a graph is a permutation graph by constructing an appropriate labeling of its vertices satisfying \prop{P1} and \prop{P2}.

\subsection{Well-Quasi-Ordering.} Let $(\cP,\leq)$ be a partially ordered set (poset). When discussing a poset we will often supress the partial order and just write $\cP$. An \defn{antichain} in $\cP$ is a (potentially infinite) sequence of elements of $\cP$, $p_1,p_2,p_3,\dots$, such that $p_i \nleq p_j$ for any $j > i$. We say that $\cP$ is \defn{well-quasi-ordered}, also referred to as \defn{noetherian}, if $\cP$ is well founded and does not have an infinite antichain with respect to $\leq$. Equivalently, $\cP$ is noetherian if any infinite sequence of elements $p_1,p_2,\dots$ in $\cP$ contains some increasing pair $p_i \leq p_j$ with $i < j$.

When proving a poset is noetherian, one often proves that any infinite sequence has two elements that are comparable. When disproving noetherianity, one constructs an infinite antichain.

\section{Gr\"obner approach to noetherianity}\label{sec: sym2grob} In \cite[\S 5.3]{sym2noeth}, the authors propose a Gr\"obner theoretic approach to proving noetherianity of the tca $\Sym(\Sym^2(\bC^\infty))$. This paper is concerned with providing a negative answer to a question they pose after setting up this approach, so we will include the setup. Let $A = \Sym(\Sym^2(\bC^\infty))$. Let $x_{i,j}$, with $i \le j$, be a basis for $\Sym^2(\bC^\infty)$, so that $A = \bC[x_{i,j}]$.

Let $\cM$ be the set of undirected matchings $\Gamma$ on $\bN$. Given $\Gamma,\Gamma' \in \cM$, we define $\Gamma \to \Gamma'$ if one of the following two conditions hold,
\begin{itemize}
\item $\Gamma'$ is obtained from $\Gamma$ by adding a single edge.
\item There exists an edge $(i,j)$ in $\Gamma$ such that $j+1$ is not in $\Gamma$, and $\Gamma'$ is obtained from $\Gamma$ by replacing $(i,j)$ with $(i,j+1)$. (Here $i< j$ or $j < i$).
\end{itemize}
The authors in \cite{sym2noeth} call $\Gamma \to \Gamma'$ a \defn{Type I move}. We refer to the first bullet point as a Type I(a) move and the second as a Type I(b) move. They define $\Gamma \leq \Gamma'$ if there is a sequence of type I moves transforming $\Gamma$ to $\Gamma'$. This partially orders $\cM$. On the level of graphs, Type I moves allow you to add edges connecting valence $0$ vertices and to shift existing edges up by one vertex if the next vertex is empty.

They then define a total order $\preceq$ on $\cM$. First, suppose that $i < j$ and $k < \ell$ are elements of $\bN$. Define $(i,j) \preceq (k,\ell)$ if $j < \ell$, or $j = \ell$ and $i \leq k$. They then expand this definition to a lexicographic order on $\cM$. Explicitly, let $\Gamma$ and $\Gamma'$ be two elements of $\cM$ with $e_1 \preceq e_2 \preceq \cdots \preceq e_n$ and $e_1' \preceq e_2' \preceq \cdots \preceq e_m'$ their edges listed in increasing order. Then $\Gamma \preceq \Gamma'$ if $n < m$, or if $n = m$ and $(e_1,\dots,e_n) \preceq (e_1',\dots,e_m')$ under the lexicographic order reading from right to left, to stay consistent with the definition on single edges.

Given $\Gamma \in \cM$, we define $m_\Gamma = \prod_{(i,j) \in \Gamma} x_{i,j}$. Every admissible weight vector is a sum of $m_\Gamma$'s, and every polynomial representation of $\GL_\infty$ is generated by its admissible weight vectors (\S \ref{subsec: admissibleWeights}), so we can restrict our attention to these elements. Using $\preceq$, we let the \defn{initial element} of any $f \in A$ be the largest $\Gamma$ under $\preceq$ such that $m_\Gamma$ appears with nonzero coefficient in $f$. We denote the initial variable by ${\rm in}(f)$.

For any ideal $I$ of $A$, let ${\rm in}(I) = \{{\rm in}(f) \; | \; f \in I\}$ be the set of initial elements in $I$. In \cite{sym2noeth}, the authors observe that ${\rm in}(I)$ is closed under Type I moves, and therefore forms a poset ideal of the poset $(\cM, \leq)$. But this poset is not noetherian. This leads to the introduction of more ``types'' of moves to hopefully remedy this situation. All of these moves come from allowing $\GL_\infty$ to act in a way that respects the total order $\preceq$ and therefore preserves the initial ideal ${\rm in}(I)$. Each new type of move is finding a slightly more complex action.

The next type of moves the authors define as follows. We include pictures illustrating the moves and refer the reader to \cite{sym2noeth} for the explicit definition. We do this because the pictures are generally a much clearer illustration of the moves and it is not hard to translate between the perfect matchings and monomials.
\begin{equation} \label{TypeIImoves}
\begin{tikzcd}[sep=small]
a \ar[dash,rrr,bend left = 40] \ar[dotted,dash,r] &b \ar[dash,bend left = 40, r]&c&d
\end{tikzcd}
\implies
\begin{tikzcd}[sep=small]
a \ar[dash,rr,bend left = 45]  &b\ar[dotted,dash,r] \ar[dash,bend left = 45, rr]&c&d
\end{tikzcd}
\implies
\begin{tikzcd}[sep=small]
a \ar[dash,r,bend left = 45]  &b &c\ar[dash,bend left = 45, r]&d
\end{tikzcd},
\end{equation}
where $a < b < c < d$ and the dotted lines indicate that any element there is either not an edge or is connected to a number larger than $c$. Write $\Gamma \implies \Gamma'$ to indicate that $\Gamma'$ is related to $\Gamma$ by a sequence of any of the two modifications in \eqref{TypeIImoves}. These are called ``Type II'' moves. We refer to the first move as a Type II(a) move and the second as Type II(b). One can then place a new partial order $\sqsubseteq$ on $\cM$ where $\Gamma \sqsubseteq \Gamma'$ if there exists a sequence of moves (of any type) taking $\Gamma$ to $\Gamma'$. The authors observe that these moves respect the initial ideal so that ${\rm in}(I)$ is still a poset ideal of $(\cM, \sqsubseteq)$. They pose the following question:

\begin{question} \label{question: noethq}
Is the poset $(\cM,\sqsubseteq)$ noetherian?
\end{question}

This question has been open since 2015. The remainder of this paper is dedicated to answering this question in the negative. We produce an explicit counterexample to the noetherianity of this poset. In doing so, we establish a connection between this poset and a poset of graphs.

\section{Setting up the counterexample} \label{sec: settingUp}
We begin by restricting ourselves to a particular subset of perfect matchings where the first vertices $\{1,\dots,n\}$ are all paired with vertices in $\{n+1,\dots,2n\}$. We call such perfect matchings \defn{intertwined}. We will often use visual representations of these matchings in terms of graphs. When we do so, we are always assuming the graphs have vertices labeled $\{1,\dots,2n\}$ placed in ascending order form left to right. An important property of intertwined perfect matchings is that they do not have a subgraph of the following type:
\[
\begin{tikzcd}
a \ar[bend left=30,dash,r] & b & c \ar[bend left=30,dash,r]& d,
\end{tikzcd}
\]
with $a < b < c < d$. One reason for our restriction to this class is we will never make use of the Type II(b) move in \cite{sym2noeth} because this would create a non-intertwined perfect matching and once a matching is non-intertwined none of the moves can make it intertwined again. These perfect matchings on $2n$ letters are also easily encoded by words on the alphabet $[n]$, all with distinct letters. Indeed, given an intertwined perfect matching with edges $(e_1,n+1),\dots,(e_n,2n)$ this corresponds bijectively to the word $e_{n}\cdots e_1$. The opposite direction is clear. 

Intuitively, to read off the corresponding word from an intertwined perfect matching you work from right to left and write down the number of the origin vertex connected to each terminal vertex in your matching. For an intertwined perfect matching $\Gamma$, we denote its corresponding word by $w_\Gamma$.

\begin{example}
To illustrate this bijection consider the following intertwined perfect matching on six vertices,
\[
\begin{tikzcd}
1 \ar[bend left = 30,dash,rrrr] &2 \ar[bend left = 30,dash,rrrr] & 3  \ar[bend left = 30,dash,r]  & 4 & 5 & 6.
\end{tikzcd}
\]
This corresponds to the word $213$. It is also not hard to go in the other direction. For example, the word $312$ corresponds to
\[
\begin{tikzcd}
1 \ar[bend left = 30,dash,rrrr] &2  \ar[bend left = 30,dash,rr] & 3  \ar[bend left = 30,dash,rrr]  & 4 & 5 & 6.
\end{tikzcd}
\]
\end{example}

Now we wish to understand Type I and II moves in terms of the words corresponding to intertwined perfect matchings. First, we must make a definition. For a word $w = w_1\cdots w_n$ with distinct letters in $[m]$ with $m \ge n$, we let the \defn{reduced word} of $w$ denoted ${\rm red}(w)$ be the word where we replace the letters $w_{i_1} < w_{i_2} < \cdots < w_{i_n}$ with $1 < 2 < \cdots < n$. For example, the reduced word of $364$ is $132$. Type I moves correspond to order preserving injections and adding additional letters to the word. More explicitly, we say one word $w_1\cdots w_n$ is \defn{order isomorphic} to a subword of another word $s_1 \cdots s_m$ with $m \geq n$ if there exists $s_{i_1}\cdots s_{i_n}$, $s_{i_j} \geq w_{j}$ for $1 \leq j \leq n$, with ${\rm red}(s_{i_1}\cdots s_{i_n})= {\rm red}(w_1\cdots w_n)$. Then we have

\begin{proposition} \label{prop: TypeIequiv}
An intertwined perfect matching $\Gamma$ can be transformed into another intertwined perfect matching $\Gamma'$ via Type I moves if and only if $w_\Gamma$ is order isomorphic to a subword of $w_{\Gamma'}$.
\end{proposition}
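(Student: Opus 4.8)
The plan is to prove the two directions separately, tracking endpoints explicitly. I would first record a simplification: since $w_\Gamma$ is a permutation of $[n]$, the letter $w_j$ is exactly the rank of the chosen letter $s_{i_j}$ among $s_{i_1},\dots,s_{i_n}$, and the $k$-th smallest of $n$ distinct positive integers is at least $k$; hence the dominance condition $s_{i_j}\ge w_j$ in the definition of ``order isomorphic to a subword'' is automatic, and here that notion simply says that $w_{\Gamma'}$ contains $w_\Gamma$ as a pattern. Two other facts I would use throughout: in an intertwined matching the smaller endpoint of every edge lies in the origin block $\{1,\dots,n\}$ and the larger in the terminal block $\{n+1,\dots,2n\}$; and the letter of $w_\Gamma$ in position $2n+1-t$ is the origin matched to the terminal $t$.

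For the forward direction I would not argue inside the class of intertwined (or even perfect) matchings, since a chain of Type I moves can leave it. Instead, for \emph{arbitrary} matchings $\Delta,\Delta'$ on $\bN$ I would say $\Delta$ embeds in $\Delta'$ if there is an injection $\phi$ from the matched vertices of $\Delta$ to those of $\Delta'$ that is order preserving, satisfies $\phi(v)\ge v$, and sends edges to edges. A single Type I(a) move gives such an embedding via the identity inclusion, and a single Type I(b) move $(i,j)\mapsto(i,j{+}1)$ gives one via the map fixing every vertex and sending $j\mapsto j{+}1$ — here one uses that $j{+}1$ was unmatched, so every matched vertex exceeding $j$ is at least $j{+}2$. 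Embeddings compose, so transforming $\Gamma$ into $\Gamma'$ by Type I moves produces an embedding $\phi$ of $\Gamma$ in $\Gamma'$. If $\Gamma,\Gamma'$ are intertwined, then for each edge $(e_\ell,n{+}\ell)$ of $\Gamma$ the image $(\phi(e_\ell),\phi(n{+}\ell))$ is an edge of $\Gamma'$ with $\phi(e_\ell)<\phi(n{+}\ell)$, and the intertwined property forces $\phi$ to carry terminals to terminals and origins to origins. Writing $\phi(n{+}\ell)=m{+}s(\ell)$ with $s$ strictly increasing, the letters of $w_{\Gamma'}$ in the positions produced by the $\phi(n{+}\ell)$ spell out $\phi(e_n)\cdots\phi(e_1)$, whose reduction equals $\mathrm{red}(e_n\cdots e_1)=w_\Gamma$ because $\phi$ is order preserving on the origins. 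This exhibits $w_\Gamma$ as a subword of $w_{\Gamma'}$.

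For the backward direction, suppose $w_{\Gamma'}$ contains a copy of $w_\Gamma$ at a set of positions $P$. Deleting the positions outside $P$ one at a time and reducing gives a chain of permutation words $w_{\Gamma'}=u^{(0)},\dots,u^{(m-n)}=w_\Gamma$ in which $u^{(t)}$ comes from $u^{(t-1)}$ by removing one position; so it suffices to treat a single such step, i.e.\ to show that if $\Delta$ is the intertwined matching whose word is obtained from $w_{\Delta'}$ by deleting one position and reducing, then $\Delta$ can be transformed into $\Delta'$ by Type I moves. Concretely $\Delta$ is $\Delta'$ with a single edge $(c,d)$ deleted ($c$ an origin, $d$ a terminal) and the vertices above $c$, resp.\ above $d$, relabelled down. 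I would build $\Delta'$ from $\Delta$ by an explicit sequence of moves: first shift the terminal endpoints of $\Delta$'s edges up into their $\Delta'$-positions, processing them from the highest downward so that each shift passes only through vertices just vacated; then do the same for the origin endpoints, using the vertex freed below the terminal block; finally add $(c,d)$ by a Type I(a) move, both its endpoints now being unmatched. Composing the $m-n$ steps gives $\Gamma\le\Gamma'$.

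The single-move verifications and the passage from $\phi$ to words are routine. The delicate part, which I would write out in full, is the choreography in the backward step: one must check that \emph{every} individual shift lands on a currently unmatched vertex. This works because the origin block and the terminal block of $\Delta$ each consist of \emph{consecutive} integers (a consequence of being intertwined), so that processing a block from the top down, the ``hole'' left by the not-yet-added edge $(c,d)$ propagates downward ahead of the next target; pinning down these dependencies is where the real content lies.
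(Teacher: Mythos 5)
Your proof is correct, and its overall architecture matches the paper's: forward by tracking where the edges of $\Gamma$ are sent (lex/endpoint order preserved), backward by shifting edges up and then inserting the missing ones. The main difference is that you supply the details the paper's proof elides. In the forward direction you package the image-tracking in a notion of embedding of arbitrary matchings, which neatly sidesteps the paper's unverified side claim that every intermediate matching ``will always remain intertwined'' (it can in fact momentarily fail to be, and your embedding formalism makes that irrelevant). In the backward direction the paper gives only a two-sentence sketch (``shift edges up via Type I(b), fill in via Type I(a)''), whereas you reduce to a single position-deletion at a time and then give an explicit, verifiable choreography: move the terminal endpoints top-down, then the origin endpoints top-down, then add $(c,d)$; the top-down order is what guarantees each shift steps onto a vertex that is already vacant. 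You also correctly observe, which the paper does not remark on, that the dominance condition $s_{i_j}\geq w_j$ in the definition of ``order isomorphic to a subword'' is automatic for permutation words, so the notion is just pattern containment. None of this changes the route, but it does make the argument genuinely rigorous where the paper was only indicative.
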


\begin{proof}
Suppose first that we have $\Gamma \to \Gamma'$. Let $(a_n,n+1) \preceq (a_{n-1},n+2) \preceq \cdots \preceq (a_2,2n-1) \preceq (a_1,2n)$ be the edges of $\Gamma$ listed in the lex order described in $\S$\ref{sec: sym2grob}. We can write the edges in this way because the matching is perfect and intertwined. Notice that we obtain the corresponding word $w_\Gamma$ as $a_1a_2\cdots a_n$.

Consider each of these edges in $\Gamma$ and where they are sent in $\Gamma'$ after applying the Type I moves. We note that after each Type I move, the matching may no longer be a perfect matching, but the final result, i.e. $\Gamma'$ will be. Importantly, though, the matching will always remain intertwined. Any Type I (a) move performed on one of the edges from $\Gamma$ sends $(i,j)$ to $(i,j+1)$ or $(i+1,j)$ but only if $j+1$ or $i+1$ respectively is valence zero. This does not change the order of the edges in $\Gamma$, i.e. the listed edges do not swap in the lex order. Any Type I(b) move adds a single edge to $\Gamma$, but again does not change the order of the edges in the original matching $\Gamma$. Since the order of the edges in $\Gamma$ was not changed, if we consider the subword corresponding to the image of these edges in $w_{\Gamma'}$, we recover a word that is order isomorphic to $w_\Gamma$.
%

Conversely, suppose we have $w_\Gamma$ order isomorphic to a subword of $w_{\Gamma'}$ for some intertwined perfect matchings $\Gamma$ and $\Gamma'$. This means there is an order preserving injection of the letters of $w_\Gamma$ into the letters of $w_{\Gamma'}$. This corresponds to shifting edges up, i.e. Type I(b) moves. We then fill in the remaining letters of $w_{\Gamma'}$ using Type I(a) moves.
\end{proof}

Type II(a) moves are a bit more subtle. 

\begin{proposition} \label{prop: TypeIIequiv}
Applying a Type II(a) move to an intertwined perfect matching $\Gamma$ corresponds to swapping two letters $i < j$ if $i$ appears before $j$ and all the numbers between $i$ and $j$ appear before $j$ when reading from left to right.
\end{proposition}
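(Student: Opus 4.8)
The plan is to translate the Type II(a) move, as depicted in \eqref{TypeIImoves}, directly into the language of words through the bijection $\Gamma \mapsto w_\Gamma$, being careful about the order reversal built into that bijection. Recall that for an intertwined perfect matching on $\{1,\dots,2n\}$ whose edges, listed in $\preceq$-increasing order, are $(a_n,n+1) \preceq \cdots \preceq (a_1,2n)$, we have $w_\Gamma = a_1 a_2 \cdots a_n$; thus the letter in position $p$ of $w_\Gamma$ is the origin of the edge into terminal vertex $2n+1-p$, or equivalently, the value $\ell$ occupies position $p$ precisely when $(\ell,2n+1-p) \in \Gamma$. Under this dictionary, ``position $p$ lies left of position $q$'' ($p<q$) means ``terminal vertex $2n+1-p$ exceeds terminal vertex $2n+1-q$'', and ``value $k$ appears before value $j$'' means ``$k$ is matched to a terminal vertex strictly larger than the one matching $j$''. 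A preliminary observation I would record is that, since $\Gamma$ is intertwined and perfect, every edge joins an origin vertex in $\{1,\dots,n\}$ to a terminal vertex in $\{n+1,\dots,2n\}$, with the origin being the smaller of the two.

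For the forward direction, suppose a Type II(a) move is applied to $\Gamma$, involving vertices $a<b<c<d$ with $(a,d),(b,c) \in \Gamma$ and the dotted condition that every vertex strictly between $a$ and $b$ is either unmatched or matched above $c$. By the preliminary observation, $a,b$ are origins and $c,d$ are terminals, the chain $a<b<c<d$ is automatic, and the ``unmatched'' alternative cannot occur (every integer between $a$ and $b$ is $<b\le n$, hence an origin). Put $i=a$, $j=b$, and let $p = 2n+1-d$ and $q = 2n+1-c$ be the positions of $i$ and $j$ in $w_\Gamma$; since $c<d$ we get $p<q$, so $i$ appears before $j$, and the dotted condition says each integer $k$ with $i<k<j$ is matched to a terminal $>c$, hence sits in a position $<q$, i.e. appears before $j$. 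Thus the hypothesis of the proposition holds. The move replaces $(a,d),(b,c)$ by $(a,c),(b,d)$ and fixes every other edge, so in $w_{\Gamma'}$ the value $i$ moves to position $q$, the value $j$ moves to position $p$, and all other letters are unchanged --- exactly the swap of $i$ and $j$.

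For the converse, start from $w_\Gamma$ and values $i<j$ in positions $p<q$ such that every $k$ with $i<k<j$ occurs before $j$. Reading off $\Gamma$, the edges $(i,2n+1-p)$ and $(j,2n+1-q)$ are present; set $d = 2n+1-p$ and $c = 2n+1-q$, so $c<d$, and since $i,j \le n < n+1 \le c$ we have $a := i < b := j < c < d$. Each integer strictly between $i$ and $j$ is an origin, occurs before $j$, and is therefore matched to a terminal $>c$, so the dotted condition of \eqref{TypeIImoves} is satisfied and a Type II(a) move is available; it replaces $(i,d),(j,c)$ by $(i,c),(j,d)$, which is again an intertwined perfect matching, and on words it exchanges the positions of $i$ and $j$ and nothing else --- the required swap. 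I would also note explicitly that the move keeps us within the class of intertwined perfect matchings, so we never leave the setting under consideration.

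The content here is really just careful bookkeeping, and that is where I expect the only friction: the bijection $\Gamma \mapsto w_\Gamma$ reverses order (position $p \leftrightarrow$ terminal vertex $2n+1-p$), so ``before/after'' in the word corresponds to ``higher/lower among terminal vertices'' in the matching and the inequalities flip accordingly; and one must check that intertwinedness forces the four vertices of the move into the pattern origin $<$ origin $<$ terminal $<$ terminal, which is what renders the ``not an edge'' clause of \eqref{TypeIImoves} vacuous and makes the dotted condition match the phrase ``all the numbers between $i$ and $j$ appear before $j$'' verbatim.
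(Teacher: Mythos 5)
Your proof is correct and follows the same route as the paper: translate the diagrammatic definition of a Type II(a) move through the bijection $\Gamma\mapsto w_\Gamma$ and verify both directions. The paper's own proof is very terse ("it is clear from translating..."), so your version mainly supplies the bookkeeping — the position-to-terminal dictionary $p \leftrightarrow 2n+1-p$, the observation that intertwinedness forces $a<b<c<d$ into origin--origin--terminal--terminal form and makes the "not an edge" clause vacuous, and the explicit check of the dotted condition — all of which is consistent with what the paper leaves implicit.
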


\begin{proof}
It is clear from translating the definition of a Type II(a) move to the word representation of a perfect matching that we are allowed to apply a Type II(a) move if and only if the corresponding letters we wish to swap are $i < j$ with $i$ appearing before $j$. Furthermore, the restriction that any element between the vertices labeled with $a$ and $b$ in \eqref{TypeIImoves} must be connected to a vertex larger than $c$ means that every number strictly between $i$ and $j$ must appear before $j$.
\end{proof}

Words corresponding to intertwined perfect matchings on $2n$ vertices can be thought of as permutations of $[n]$ in one-line notation, this is also sometimes called the \defn{word representation} of a permutation. We note that we think of this connection on the level of posets. We will adopt this viewpoint because well-quasi-orders on permutations have received a good amount of attention since the early 2000s, and we would like to use techniques and results from this area.

Type I moves in this setting then correspond to the well studied pattern containment order, which is also known to not be well-quasi-ordered. Numerous examples exist to demonstrate this which arise in various settings, to name a few: Laver \cite{wqoFinSeq}, Pratt \cite{compPerm} and Speilman and B\'ona \cite{infAntiPerm}. For a straight-forward antichain, we particularly recommend Speilman and B\'ona's paper. In this way, we find many other counterexamples to perfect matchings with just Type I moves being well-quasi-ordered. 

The addition of Type II moves and the partial order it induces on permutations has, to our knowledge, not received any attention in the literature; especially in the context of combining Type I and Type II moves to compare permutations of any length. We use $\leq_{i}$ to denote this partial order on permutations and $p_\Gamma$ to denote the permutation corresponding to $\Gamma$. Propositions \ref{prop: TypeIequiv} and \ref{prop: TypeIIequiv} imply,

\begin{proposition} \label{prop: matchingConnection}
For two intertwined perfect matchings $\Gamma,\Gamma'$, $\Gamma \sqsubseteq \Gamma'$ if and only if $p_\Gamma \leq_i p_{\Gamma'}$.
\end{proposition}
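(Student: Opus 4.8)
The plan is to read both implications off of Propositions~\ref{prop: TypeIequiv} and~\ref{prop: TypeIIequiv}, which already translate the two families of moves into operations on words. Recall that, under the bijection between intertwined perfect matchings and permutations in one-line notation, $\leq_i$ is the transitive closure of two relations on permutations: pattern containment (which is what ``order isomorphic to a subword'' becomes for permutations, the pointwise inequality in Proposition~\ref{prop: TypeIequiv} being automatic once both words are permutations) and the restricted letter swap of Proposition~\ref{prop: TypeIIequiv}. So the two directions of the proposition are the assertions that the matching order $\sqsubseteq$, restricted to intertwined perfect matchings, is generated by these same two word operations.

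One direction is immediate. If $p_\Gamma \leq_i p_{\Gamma'}$, choose a finite chain of permutations $p_\Gamma = \pi_0, \pi_1, \dots, \pi_N = p_{\Gamma'}$ in which each $\pi_{t+1}$ is obtained from $\pi_t$ either by adding/enlarging letters (a pattern-containment step) or by a Type~II(a) swap. Let $\Delta_t$ be the intertwined perfect matching with $w_{\Delta_t} = \pi_t$, so that $\Delta_0 = \Gamma$, $\Delta_N = \Gamma'$. Proposition~\ref{prop: TypeIequiv} realizes each step of the first kind by a sequence of Type~I moves $\Delta_t \to \cdots \to \Delta_{t+1}$, and Proposition~\ref{prop: TypeIIequiv} realizes each step of the second kind by a single Type~II(a) move; concatenating gives a sequence of moves in $\cM$ from $\Gamma$ to $\Gamma'$, so $\Gamma \sqsubseteq \Gamma'$.

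For the other direction, suppose $\Gamma \sqsubseteq \Gamma'$ via a sequence of moves $\Gamma = \Lambda_0 \to \Lambda_1 \to \cdots \to \Lambda_k = \Gamma'$ in $\cM$; the intermediate $\Lambda_t$ need be neither perfect nor supported on an initial segment. First, every $\Lambda_t$ is still \emph{intertwined}, i.e.\ has no two edges $(a,b),(c,d)$ with $a<b<c<d$: a short case check over the move types shows that no Type~I or Type~II(a) move can destroy such a pair once present (Type~II(b) moves never occur, as they would leave the intertwined class), so since $\Gamma'$ is intertwined, so is every $\Lambda_t$. Second, the word dictionary extends to all intertwined matchings: being ``not in series'' forces every left endpoint of an edge to precede every right endpoint, so reading the origins of the right endpoints in decreasing order produces a word $w_\Lambda$ whose reduction $p_\Lambda$ is a well-defined permutation, agreeing with the earlier definition when $\Lambda$ is a perfect matching. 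One then checks how $p_\Lambda$ evolves along the path: a Type~I move that only shifts an endpoint into an empty adjacent slot leaves $p_\Lambda$ unchanged; a Type~I move that adds an edge changes $p_\Lambda$ by a pattern-containment step; and a Type~II(a) move changes $p_\Lambda$ by a Type~II(a) letter swap, exactly as in Proposition~\ref{prop: TypeIIequiv}, whose proof does not use perfectness. Reading these changes off in order yields a chain from $p_\Gamma$ to $p_{\Gamma'}$ built from the two word operations, i.e.\ $p_\Gamma \leq_i p_{\Gamma'}$.

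The main obstacle is this last round of bookkeeping: the genuine combinatorics is already done in Propositions~\ref{prop: TypeIequiv} and~\ref{prop: TypeIIequiv}, and the remaining work is to confirm that permitting the intermediate matchings to leave the class of (initial-segment) perfect matchings changes nothing. Concretely this means pinning down ``intertwined'' as the correct invariant closure condition --- it is what makes the extracted permutation well defined and the moves behave as claimed --- and re-reading those two propositions with the perfectness hypothesis removed. I expect the finite case check that an ``in series'' pair of edges survives every move to be the most tedious step, although it is routine.
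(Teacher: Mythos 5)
The paper offers no proof of this proposition at all: it just asserts that Propositions~\ref{prop: TypeIequiv} and~\ref{prop: TypeIIequiv} ``imply'' it. Your write-up is therefore supplying an argument the paper omits, and you have correctly identified the one genuine subtlety that a naive composition of those two propositions leaves unaddressed: along a sequence of moves in $\cM$ from $\Gamma$ to $\Gamma'$, the intermediate matchings need not be perfect, so Proposition~\ref{prop: TypeIIequiv} (stated for a single move applied to a perfect matching) does not apply verbatim, and there is no obvious way to reorder the moves so that Type~II steps only ever hit perfect matchings. Your fix --- extending $w_\Lambda$ and $p_\Lambda$ to arbitrary intertwined matchings (justified by noting that ``no in-series pair'' is equivalent to every left endpoint preceding every right endpoint), observing that intertwinedness is preserved backwards along the move sequence because no Type~I or Type~II(a) move can ever destroy an in-series pair, and then checking that edge-additions induce pattern-containment steps while endpoint-shifts leave $p_\Lambda$ fixed --- is a correct and natural way to close the gap. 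The easy direction, realizing a chain under $\leq_i$ by concatenating the move sequences from the two propositions, is also fine, and your aside that the pointwise inequality $s_{i_j} \geq w_j$ in Proposition~\ref{prop: TypeIequiv} is automatic for permutations is correct (each $w_j$ equals its rank among $w_1,\dots,w_n$, and a letter of rank $t$ in any set of distinct positive integers is at least $t$). The one step I would flesh out slightly is the claim that the proof of Proposition~\ref{prop: TypeIIequiv} ``does not use perfectness'': it is true, but the translation of the dotted-line constraint into ``every letter strictly between the two swapped values appears before the later one'' quietly uses that degree-$0$ vertices drop out under reduction, which is worth saying explicitly.
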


\begin{remark}
This partial order is closely related to the Bruhat order, indeed it is strictly weaker than the Bruhat order when we restrict to permutations of a specific size. The (strong) Bruhat order allows you to swap $i < j$ with $i$ appearing before $j$ if all the numbers between $i$ and $j$ appear before $i$ or after $j$.
\end{remark}

When constructing infinite antichains for permutation classes, it is often convenient to work instead with the corresponding permutation graph. When working with permutation graphs, we will consider the graphs as having vertices labeled by $\{1,\dots, n\}$, but when comparing these graphs under the induced subgraph or subgraph relation, the labelings are irrelevant. We are only concerned with the graph itself. It is well known, and not hard to show,

\begin{proposition} \label{prop: TypeIgraph}
If $\sigma$ is order isomorphic to a sub permutation of $\tau$ then $G_\sigma$ is an induced subgraph of $G_\tau$
\end{proposition}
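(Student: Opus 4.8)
The plan is to unwind the definitions of "order isomorphic to a sub permutation" and "permutation graph" and check that the correspondence respects edges. Suppose $\sigma$ is a permutation of $[k]$ that is order isomorphic to a sub permutation of $\tau$, a permutation of $[n]$. By definition this means there exist indices $i_1 < i_2 < \cdots < i_k$ in $[n]$ such that the sequence $\tau(i_1),\tau(i_2),\dots,\tau(i_k)$, when reduced to a permutation of $[k]$, equals $\sigma$. I would first record the key numerical fact that underlies everything: for any $1 \le a < b \le k$, the pair $(a,b)$ is an inversion in $\sigma$ (that is, $\sigma(a) > \sigma(b)$) if and only if $\tau(i_a) > \tau(i_b)$, i.e. $(i_a, i_b)$ is an inversion in $\tau$. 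This is immediate because reduction to a permutation of $[k]$ preserves the relative order of the values, and $i_a < i_b$ since $a < b$.

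Next I would exhibit the induced-subgraph structure explicitly. Consider the vertex subset $W = \{i_1,\dots,i_k\} \subseteq [n] = V(G_\tau)$ together with the bijection $\phi \colon [k] \to W$, $\phi(a) = i_a$, which is the unique order-preserving bijection. I claim $\phi$ is a graph isomorphism from $G_\sigma$ onto the induced subgraph $G_\tau[W]$. By the characterization of edges in a permutation graph, $(a,b) \in E(G_\sigma)$ (with $a < b$) iff $\sigma(a) > \sigma(b)$, and $(\phi(a),\phi(b)) = (i_a,i_b) \in E(G_\tau)$ (with $i_a < i_b$) iff $\tau(i_a) > \tau(i_b)$. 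By the numerical fact from the previous paragraph these two conditions are equivalent, so $\phi$ sends edges of $G_\sigma$ to edges of $G_\tau[W]$ and non-edges to non-edges. Since $\phi$ is a bijection between $[k]$ and $W$, this shows $G_\sigma \cong G_\tau[W]$, and $G_\tau[W]$ is by definition an induced subgraph of $G_\tau$. Hence $G_\sigma$ is (isomorphic to) an induced subgraph of $G_\tau$, which is all that is claimed since the labelings are irrelevant when comparing under the induced subgraph relation.

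There is essentially no hard part here; the statement is the assertion that the permutation-graph construction is functorial with respect to pattern containment, and the only thing to be careful about is bookkeeping — making sure that the index set $\{i_1 < \cdots < i_k\}$ used to witness "sub permutation" is exactly the vertex set used to cut out the induced subgraph, and that "reduction" genuinely preserves the inversion relation rather than merely the values. One could alternatively phrase the whole argument by composing the bijection $[k] \to W$ with the identity and simply noting that an induced subgraph of a permutation graph on a vertex subset is the permutation graph of the pattern obtained by restricting to that subset; I would state this as the reason the converse-type functoriality also holds, but for the proposition as stated the forward direction above suffices.
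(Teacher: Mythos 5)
Your argument is correct and complete. Where the paper simply cites \cite[\S 1]{wqoPermGraph} for this fact rather than writing out a proof, you supply the standard argument: take the witnessing index set $W = \{i_1 < \cdots < i_k\}$, observe that the order-preserving bijection $\phi(a) = i_a$ carries inversions of $\sigma$ exactly onto inversions of $\tau$ among the positions in $W$ (because reduction preserves the relative order of values, and the positions are chosen increasing), and conclude that $\phi$ is a graph isomorphism from $G_\sigma$ onto the induced subgraph $G_\tau[W]$. This is precisely the bookkeeping the cited reference carries out, so there is no gap; the one small caution worth noting is that the paper's definition of \emph{order isomorphic to a subword} in \S\ref{sec: settingUp} includes the extra constraint $s_{i_j} \geq w_j$ (needed to encode Type I(b) moves), which is stronger than ordinary pattern containment. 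Your proof only uses the $\mathrm{red}$-equality, so it proves the proposition for ordinary pattern containment and \emph{a fortiori} for the paper's stronger notion, which is the right level of generality for how the proposition is used.
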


\begin{proof}
See for example \cite[\S 1]{wqoPermGraph}. 
\end{proof}

The converse is not true because the map from permutations to their graphs is many-to-one. Indeed, a permutation and its inverse have the same permutation graph and are not order isomorphic.

This correspondence, though, is used to either construct counterexamples on the graph theoretic side that carry over to counterexamples of permutations, or to prove that a class of permutation graphs is well-quasi-ordered by showing that the corresponding class of permutations is well-quasi-ordered \cite{wqoPermGraph}.

Now we are ready to see a few properties that Type II(a) moves have in the graph theoretic picture,

\begin{proposition} \label{prop: moreEdges}
Applying a Type II move to $\sigma$ always increases the number of edges in the corresponding permutation graph, while keeping the number of vertices the same.
\end{proposition}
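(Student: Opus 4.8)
\emph{Proof proposal.} The plan is to reduce everything to the classical fact that right multiplication by a transposition changes the inversion number of a permutation by an odd integer. The claim about vertices is immediate: a Type II move only permutes two entries of the one-line word representing $\sigma$, so it does not change the length of that word, and hence $G_\sigma$ and the graph after the move both have $n$ vertices. For the edge count, recall that $|E(G_\sigma)|$ is by definition the number of inversions of $\sigma$, so it suffices to show that a Type II move strictly increases the inversion number.

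By Proposition~\ref{prop: TypeIIequiv} -- and since on intertwined perfect matchings only Type II(a) can occur -- a Type II move replaces $\sigma$ by the permutation $\sigma'$ obtained from the one-line word of $\sigma$ by swapping the two values $i<j$, where $i$ sits in position $a$ and $j$ in position $b$ with $a<b$; equivalently $\sigma' = \sigma\circ(a\,b)$, the composition with the transposition of the positions $a$ and $b$. I would then compare the two words pair of positions by pair of positions: only pairs involving $a$ or $b$ can change their inversion status. The pair $\{a,b\}$ itself goes from a non-inversion (as $\sigma(a)=i<j=\sigma(b)$) to an inversion, contributing $+1$. For a third position $k$ with $k<a$ or $k>b$, a short check of the three cases $\sigma(k)<i$, $i<\sigma(k)<j$, $\sigma(k)>j$ shows that whatever one of the pairs $\{k,a\},\{k,b\}$ gains the other loses, so the net contribution of $k$ is $0$; for a position $k$ with $a<k<b$, the same case check gives net contribution $+2$ if $i<\sigma(k)<j$ and $0$ otherwise. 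Hence
\[
\mathrm{inv}(\sigma') \;=\; \mathrm{inv}(\sigma) + 2m + 1, \qquad m := \#\{\, k : a<k<b,\ i<\sigma(k)<j \,\} \;\ge\; 0 ,
\]
so $\mathrm{inv}(\sigma') > \mathrm{inv}(\sigma)$ and therefore $|E(G_{\sigma'})| > |E(G_\sigma)|$, as claimed. Alternatively, since $\sigma'=\sigma\circ(a\,b)$ with $\sigma(a)<\sigma(b)$, this is exactly the standard formula for the length change of a permutation under right multiplication by a transposition, and one could simply cite it.

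There is no genuine obstacle in this argument; the only point requiring care is the sign bookkeeping in the case analysis for the third positions. It is perhaps worth remarking that the applicability hypothesis of a Type II(a) move -- that every value strictly between $i$ and $j$ occurs before $j$ -- plays no role here: the inversion count increases by the odd number $2m+1\ge 1$ for \emph{any} swap of two values in which the smaller one precedes the larger. What the hypothesis controls is the size of $m$, i.e.\ precisely how many new edges the move creates.
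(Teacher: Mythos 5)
Your proof is correct, and the argument it gives is cleaner than the one in the paper, but it takes a genuinely different route. The paper's proof proceeds by an explicit case analysis: the remaining letters are classified by value into three ranges $(A)$, $(B)$, $(C)$ (less than $a$, between $a$ and $b$, greater than $b$) and by position into three ranges $(I)$, $(II)$, $(III)$ (before $a$, between $a$ and $b$, after $b$), and for each of the resulting eight admissible pairings the paper records exactly which edges to $a$ and $b$ appear or disappear. You instead reduce the whole statement to the classical formula for how the inversion number of a permutation changes under right multiplication by a transposition, obtaining the explicit identity $\operatorname{inv}(\sigma') = \operatorname{inv}(\sigma) + 2m + 1$ with $m \ge 0$. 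Interestingly, the paper itself sketches precisely your approach in the remark that follows the proposition, where it notes that $\sigma \leq_i \tau$ implies $\sigma \leq_B \tau$ and that the inequality is an immediate consequence of the length increase in the Bruhat order; the authors say they ``do not prove this here because we prefer to include more details about our specific case.'' That preference is not cosmetic: the explicit $(A)(I)$--$(C)(III)$ bookkeeping is reused later in the worked examples and in the analysis of the counterexample, where one needs to track precisely which edges are created or destroyed, not just that the count goes up. Your closing observation that the applicability hypothesis of a Type II(a) move plays no role in the inequality is correct and is a nice clarification; the hypothesis only governs the value of $m$, i.e.\ how many new edges are forced.
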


\begin{proof}
We break this proof into cases depending the positioning of certain elements of the permutation. For this purpose, suppose we are going to swap the letters $a$ and $b$ in $\sigma$, with $a < b$ and $\sigma(a) < \sigma(b)$. We break the remaining letters into the following categories
\begin{enumerate}[label = (\Alph*)]
\item All elements less than $a$,
\item All elements between $a$ and $b$,
\item All elements larger than $b$.
\end{enumerate}
And we section off the places these elements could appear in $\sigma$ in the following way
\begin{enumerate}[label = (\Roman*)]
\item Appearing before $a$,
\item Appearing between $a$ and $b$,
\item Appearing after $b$.
\end{enumerate}
Notice that based on the restriction of when we can apply a Type II move, we can never have elements in (B) appearing in (III). We examine what happens to the graph for each possible pairing when we swap $a$ and $b$. This accounts for all the possible changes to the graph as every element of $\sigma$ falls into some pairing of these categories. We first consider all the elements in (A), i.e. those less than $a$.
\begin{enumerate}[label = --,leftmargin = *]
\item (A) and (I): Before the swap, these vertices were not connected to $a$ or $b$, after the swap this stays the same so there are no additional edges.
\item (A) and (II): Before the swap, these vertices were connected to $a$ but not $b$. After the swap, we remove all the edges from these vertices to $a$ and add edges from these vertices to $b$.
\item (A) and (III): We do not have to change anything because all of these vertices are connected to both $a$ and $b$ before and after the swap.
\end{enumerate}
We similarly consider all the other possible vertex pairings
\begin{enumerate}[label = --,leftmargin = *]
\item (B) and (I): No additional edges. All of these vertices are connected to $a$ and not $b$. They stay this way after the swap.
\item (B) and (II): Before the swap, these vertices were not connected to $a$ or $b$. After the swap, we must add edges from these vertices to both $a$ and $b$.
\item (B) and (III): Not allowed.
\end{enumerate}
Finally,
\begin{enumerate}[label = --,leftmargin = *]
\item (C) and (I): Before the swap there were edges from these vertices to both $a$ and $b$, this stays the same after the swap.
\item (C) and (II): Before the swap there were edges from these vertices to $b$ but not $a$. After the swap, we must remove all the edges to $b$ and add edges from each of these vertices to $a$ instead.
\item (C) and (III): Before the swap these vertices were not connected to $a$ or $b$, after the swap this stays the same.
\end{enumerate}
In all of these cases the number of edges either stays the same, or increases. But when we swap $a$ and $b$ in $\sigma$, we also have to add an edge between $a$ and $b$. As a result, the number of edges in the graph that results from applying a Type II move to $\sigma$ has strictly more edges than $G_\sigma$.
\end{proof}

\begin{remark}
Another way to see that the number of edges increases is to notice that Type II moves on permutations is a weaker version of the Bruhat order. That is $\sigma \leq_i \tau$ implies $\sigma \leq_B \tau$. We do not prove this here because we prefer to include more details about our specific case, but the previous result is an immediate corollary of this. Indeed, for two permutations of the same size, $\sigma \leq_B \tau$ means $\ell(\sigma) < \ell(\tau)$, which is equivalent to saying there are more edges in the permutation graph.
\end{remark}

\begin{example}
We include examples of how this proposition works.
%
Consider $2143 \leq_i 3142$. We had originally hoped that Type II and Type I moves would imply the subgraph relation on permutation graphs, but this turned out to not be the case. We will, however, use a well known antichain for the subgraph relation on graphs to produce the antichain for $(\cM,\sqsubseteq)$. These permutations correspond to the following graphs

\[
\begin{tikzcd}[sep = small]
&\bullet^1\ar[dash,dr]& \\
\bullet^4\ar[dash,dr]&&\bullet^2\\
&\bullet^3& \\
\end{tikzcd}
\implies
\begin{tikzcd}[sep = small]
&\bullet^1\ar[dash,dd]&\\
\bullet^4\ar[dash,rr]&&\bullet^2\ar[dash,dl]\\
&\bullet^3&.\\
\end{tikzcd}
\]
We include the labels to illustrate how one constructs permutation graphs. It is not hard to see here that the graph on the right has more edges than the graph on the left. Furthermore, we can obtain the graph on the right by following the procedure outlined in Proposition \ref{prop: moreEdges}. $1$ is in category (A) and (II), so we must remove the edge $(1,2)$ and add the edge $(1,3)$. $4$ is in category (C) and (II), so we must remove the edge $(3,4)$ and add the edge $(2,4)$. Finally, we always add the edge $(2,3)$ and we obtain the new graph.

Now let us look at an example comparing two permutations of different sizes. Consider $2143 \leq_i 34152$. We can realize this relation by $2143 \implies 3142 \to 34152$ where we first apply a Type II move to swap $2$ and $3$, then apply Type I moves sending $1 \to 1$, $2 \to 2$, $3 \to 3$ and $4 \to 5$ and adding $4$ into the second position. This corresponds to the following picture on graphs,
\[
\begin{tikzcd}[sep = small]
&\bullet\ar[dash,dr]& \\
\bullet\ar[dash,dr]&&\bullet\\
&\bullet& \\
\end{tikzcd}
\implies
\begin{tikzcd}[sep = small]
&\bullet\ar[dash,dd]&\\
\bullet\ar[dash,rr]&&\bullet\ar[dash,dl]\\
&\bullet&\\
\end{tikzcd}
\to
\begin{tikzcd}[sep=small]
&&{\color{red} \bullet}\ar[dash,ddl]\ar[dash,ddr,color=red]&&\\
{\color{red} \bullet}\ar[dash,rrrr,color=red]&&&&{\color{red} \bullet}\ar[dash,dl,color=red]\ar[dash,dlll]\\
&\bullet&&{\color{red} \bullet}&
\end{tikzcd}.
\]
It is not hard to see that the second graph is an induced subgraph of the final graph, we colored it red for clarity. Notice, also, that the first graph is not an induced subgraph of the last. This is because we need more than Type I moves to realize the connection between their corresponding perfect matchings.
\end{example}

\begin{proposition} \label{prop: preserveCycles}
Type I and Type II moves preserve cycles in permutation graphs. That is, if $G$ is a permutation graph with a cycle, after any application of Type I and II moves to any permutation associated to $G$, the resulting graph will still contain a cycle.
\end{proposition}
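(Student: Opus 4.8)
The plan is to induct on the number of moves, so that everything reduces to propagating the presence of a cycle across a \emph{single} move; note that a Type~II(b) move can never be applied to a permutation, since it would destroy intertwinedness. For a Type~I move $\sigma \to \tau$, Propositions~\ref{prop: TypeIequiv} and~\ref{prop: matchingConnection} identify $\sigma$ with an order-isomorphic subpermutation of $\tau$, so Proposition~\ref{prop: TypeIgraph} makes $G_\sigma$ an induced subgraph of $G_\tau$; since an induced subgraph contains all of the vertices and edges of any cycle it carries, the cycle survives and this case is immediate.

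The work is in a single Type~II(a) move $\sigma \to \sigma'$, swapping two elements that occupy vertices $p$ and $q$ of $G_\sigma$. The first step is to upgrade Proposition~\ref{prop: moreEdges}: its case analysis should give not just that $V(G_\sigma) = V(G_{\sigma'})$ with strictly more edges, but the precise statement that the only edges of $G_\sigma$ lost in passing to $G_{\sigma'}$ are certain edges incident to one of the two swapped vertices --- say all incident to $p$ --- and that for each lost edge $\{p,r\}$ the edges $\{p,q\}$ and $\{q,r\}$ both belong to $G_{\sigma'}$. I would also record the two non-adjacency facts the same analysis yields and the rerouting below needs: $\{p,q\}\notin E(G_\sigma)$ (it is a newly created edge) and $\{q,r\}\notin E(G_\sigma)$ for each lost edge $\{p,r\}$. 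In short, $G_{\sigma'}$ contains the new edge $\{p,q\}$, and every lost edge $\{p,r\}$ is spanned in $G_{\sigma'}$ by the length-two path $p$--$q$--$r$.

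Given these facts, the second step is to reroute a given simple cycle $C$ of $G_\sigma$ inside $G_{\sigma'}$. If $C$ uses no lost edge, then $E(C)\subseteq E(G_{\sigma'})$ and we are done. Otherwise, because every lost edge is incident to $p$ and $C$ meets $p$ in exactly two edges, $C$ uses either one or two lost edges. If it uses exactly one, $\{p,r\}$, then $C - \{p,r\}$ is a $p$--$r$ path surviving in $G_{\sigma'}$, and closing it up via $p$--$q$--$r$ gives a cycle --- unless $q$ already lies on that path, in which case the sub-path from $p$ to $q$ closed by the edge $\{p,q\}$ works, and it has length at least two because length one would force $\{p,q\}\in E(G_\sigma)$. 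If $C$ uses two lost edges $\{p,r_1\},\{p,r_2\}$ (its two edges at $p$), then deleting the vertex $p$ from $C$ leaves an $r_1$--$r_2$ path surviving in $G_{\sigma'}$, and I close it through $q$ using $\{q,r_1\},\{q,r_2\}\in E(G_{\sigma'})$; the degenerate case where $q$ lies on this path is handled as before, now invoking $\{q,r_i\}\notin E(G_\sigma)$ to guarantee the relevant sub-path has length at least two. In every case $G_{\sigma'}$ acquires a cycle.

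I expect the rerouting step, not the reduction to a single move, to be the main obstacle. Because a Type~II(a) move really does delete edges, one cannot simply pass to a subgraph, and since every deleted edge is bridged through the \emph{same} auxiliary vertex $q$, the naive reroute can fail to be a simple cycle precisely when $q$ already lies on $C$ --- which is exactly where the non-adjacency facts $\{p,q\},\{q,r\}\notin E(G_\sigma)$ are needed to force the relevant sub-paths to be long enough to close up. The other place demanding care is the bookkeeping in the upgrade of Proposition~\ref{prop: moreEdges}: one must verify that all lost edges are incident to a \emph{single} swapped vertex and correctly identify which one under the conventions in force.
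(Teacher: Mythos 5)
Your plan is in the right spirit---reroute any deleted edge through the newly created edges around the swapped pair---but it rests on a claim that is false, and you have flagged exactly the claim that needs to fail. Re-examining the case analysis behind Proposition~\ref{prop: moreEdges}: with $a < b$ the two swapped letters, an element $v < a$ sitting between them (case (A)\&(II)) loses its edge to $a$ and gains one to $b$, whereas an element $w > b$ sitting between them (case (C)\&(II)) loses its edge to $b$ and gains one to $a$. So lost edges occur at \emph{both} swapped vertices, not just one, and the auxiliary vertex through which a lost edge $\{p,r\}$ is bridged is not fixed: it is $b$ when $p=a$ and $a$ when $p=b$. The sentence ``since every deleted edge is bridged through the \emph{same} auxiliary vertex $q$'' is therefore where the argument breaks, and the subsequent dichotomy ``$C$ uses either one or two lost edges'' (both at $p$) misses the cycles that use lost edges at both $a$ and $b$---up to two at each. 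Your recorded non-adjacency facts are correct, but the ``single $p$'' reduction is not, so the two subcases you handle do not exhaust the possibilities.

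The paper's proof takes a different organization that sidesteps the issue: rather than cataloguing lost edges, it fixes a cycle and splits on whether zero, one, or two of the swapped elements lie on it, then exhibits an explicit replacement cycle in each of ten subcases (six when both are on the cycle, four when only one is). Your rerouting idea can be repaired---in the mixed case the new edge $\{a,b\}$ is the key, since it lets you close up whichever $a$--$b$ arc of the cycle has no lost edges, or reconnect the surviving $a$-free and $b$-free fragments through $a$, $b$, and $\{a,b\}$---but once one adds the mixed subcases the bookkeeping is at least as heavy as the paper's.
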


\begin{proof}
Let $w = w_1 w_2 \cdots w_m$ be any permutation associated to $G$. Any Type I move will maintain a cycle by Proposition \ref{prop: TypeIgraph}, so it remains to argue that Type II moves also preserve cycles. Pick any cycle in the graph. Suppose $w_{i_1}w_{i_2}\cdots w_{i_n}$ with $1 \leq i_1 < i_2 < \cdots < i_n \leq m$ is a minimal subpermutation whose induced subgraph contains the cycle, and that the cycle is given by $(w_{j_1},w_{j_2}, \dots, w_{j_{n}}, w_{j_1})$ where $\{j_1,j_2,\dots,j_n\} = \{i_1,i_2,\dots,i_n\}$ as unordered sets. Here we are walking along the cycle and reading off the labelings on the vertices. If we apply any Type II move that does not involve these elements, the cycle is clearly still present. Now there are a few cases to consider.

The first case is if we apply a Type II move to some $w_i$ and $w_j$ both present in the cycle, with $w_i < w_j$ and $i < j$. If we are allowed to swap $w_i$ and $w_j$ they could not appear consecutively in the cycle because they do not have an edge between them. Suppose we have $(w_i, w_{\alpha_1}, \cdots, w_{\alpha_m}, w_j)$ as the path between $w_i$ and $w_j$. We may assume that neither $w_i$, nor $w_j$ are at the beginning of the cycle because we can start our cycle from anywhere. There are six subcases to consider:

\begin{enumerate}[leftmargin = *,label=\roman*)]
\item If $m > 1$ and $w_i < w_{\alpha_1}$ and $w_j < w_{\alpha_m}$ we have a new cycle $(w_i, w_{\alpha_1}, \dots, w_{\alpha_m}, w_i)$.
\item If $m = 1$ and $w_i < w_{\alpha_1}$ and $w_j < w_{\alpha_1}$ we have a new cycle $(w_i, w_{\alpha_1}, w_j, w_i)$.
\item If $m > 1$ and $w_i > w_{\alpha_1}$ and $w_j > w_{\alpha_m}$ we have a new cycle $(w_j, w_{\alpha_1}, \dots, w_{\alpha_m}, w_j)$.
\item If $m = 1$ and $w_i > w_{\alpha_1}$ and $w_j > w_{\alpha_1}$ we have a new cycle $(w_j, w_i, w_{\alpha_1}, w_j)$.
\item For any $m \geq 1$, if $w_i < w_{\alpha_1}$ and $w_j > w_{\alpha_m}$ we have a new cycle $(w_i, w_{\alpha_1}, \dots, w_{\alpha_m}, w_j, w_i)$.
\item For any $m \geq 1$, if $w_i > w_{\alpha_1}$ and $w_j < w_{\alpha_m}$ we have a new cycle $(w_j, w_{\alpha_1}, \dots, w_{\alpha_m}, w_i, w_j)$.
\end{enumerate}

The second case is if we apply a Type II move to some $w_i$ present in the cycle and some other element $w_j$ not in the cycle, we either have $w_i < w_j$ with $i < j$ or $w_j < w_i$ with $j < i$. Both cases are similar, so we only discuss the first one. Suppose $\cdots w_{\alpha_1} w_i w_{\alpha_2} \cdots$ is the part of the cycle where $w_i$ appears. We may assume without loss of generality that $w_i$ is not the beginning of our cycle because we can start the cycle from anywhere. Once again there are four subcases the consider:

\begin{enumerate}[leftmargin = *,label=\roman*)]
\item If $w_i > w_{\alpha_1}$ and $w_i > w_{\alpha_2}$, then we can replace $(w_{\alpha_1}, w_i, w_{\alpha_2})$ in the cycle with $(w_{\alpha_1}, w_j, w_{\alpha_2})$.
\item If $w_i > w_{\alpha_1}$ and $w_i < w_{\alpha_2}$, then we can replace $(w_{\alpha_1}, w_i, w_{\alpha_2})$ in the cycle with $(w_{\alpha_1}, w_j, w_i, w_{\alpha_2})$.
\item If $w_i < w_{\alpha_1}$ and $w_i > w_{\alpha_2}$, then we can replace $(w_{\alpha_1}, w_i, w_{\alpha_2})$ in the cycle with $(w_{\alpha_1}, w_i, w_j, w_{\alpha_2})$.
\item If $w_i < w_{\alpha_1}$ and $w_i < w_{\alpha_2}$, then we can just keep $(w_{\alpha_1}, w_i, w_{\alpha_2})$ in the cycle.
\end{enumerate}

This handles all possible cases and shows that whenever we apply a Type I or Type II move, if a graph contains a cycle, it will still contain one after the move.
\end{proof}

\begin{example}
Consider the permutation $3214$. This has the following permutation graph
\[
\begin{tikzcd}[sep=small]
&\bullet^1\ar[dash,dd]\ar[dash,dr]&\\
\bullet^4&&\bullet^2\ar[dash,dl]\\
&\bullet^3&
\end{tikzcd}.
\]
We can write the cycle as $(3,2,1,3)$. Say we wish to apply the Type II move swapping $2$ and $4$. We are taking $w_2$ and swapping it with $w_4$ which is not in the cycle. As a result, we fall into the second case of Proposition \ref{prop: preserveCycles}. The part of the cycle we are concerned with directly to the left and right of $2$ is $(3,2,1)$. We see that $2 < 3$ but $2 > 1$, so we fall into sub-case iii). The proof of the Proposition tells us that the resulting permutation $3412$ will contain the cycle $(3,2,4,1,3)$, which is indeed the case
\[
\begin{tikzcd}[sep=small]
&\bullet^1\ar[dash,dd]\ar[dash,dl]&\\
\bullet^4\ar[dash,rr]&&\bullet^2\ar[dash,dl]\\
&\bullet^3&
\end{tikzcd}.
\]
Now suppose we wish to apply another Type II move, swapping $1$ and $2$. Both of these labels appear in the cycle, so we fall into the first case. The path between these two entries in the cycle is $(2,4,1)$. In this case we have $m=1$ and $w_{\alpha_1} = 4$. We see that $2 < 4$ and $1 < 4$ so we fall into subcase ii). The proof of the Proposition tells us that we have the cycle $(1,4,2,1)$ in our new graph which we see is the case
\[
\begin{tikzcd}[sep=small]
&\bullet^1\ar[dash,dd]\ar[dash,dl]\ar[dash,dr]&\\
\bullet^4\ar[dash,rr]&&\bullet^2\ar[dash,dl]\\
&\bullet^3&
\end{tikzcd}.
\]
\end{example}

We mention one other fact about how these moves affect permutation graphs because we implicitly use it, so thought it worth explicitly mentioning:

\begin{proposition} \label{prop: connected}
If two vertices in a permutation graph were connected before an application of a Type I or II move to the underlying permutation, they remain connected after. In particular, neither type of move can disconnect a connected component.
\end{proposition}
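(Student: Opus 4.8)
The plan is to treat the two kinds of moves separately, since a Type I move only ever enlarges the permutation while a Type II move can actually destroy edges (as the proof of Proposition~\ref{prop: moreEdges} shows). The Type I case is immediate: by Proposition~\ref{prop: TypeIgraph} a Type I move realizes the old permutation as an order-isomorphic subpermutation of the new one, so the old permutation graph becomes an \emph{induced} subgraph of the new one. An induced subgraph inclusion retains every old vertex and every edge among old vertices, so any path between two old vertices survives verbatim; in particular no connected component can be split, and new vertices introduced by the move cannot merge-then-disconnect anything.

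For Type II moves the content is that edges can only be destroyed in a tightly controlled way. Fix the move: by Proposition~\ref{prop: TypeIIequiv} it swaps the entry with value $a$, at position $p_a$, with the entry with value $b$, at position $p_b$, where $a<b$, $p_a<p_b$, and every value strictly between $a$ and $b$ sits at a position $<p_b$. The first thing I would record is that no vertex is removed and the value at every position other than $p_a,p_b$ is unchanged, so every edge not incident to $\{p_a,p_b\}$ survives. Then, running through the sign cases on the position and value of a neighbour, I would check: (i) $p_a$ and $p_b$ become adjacent after the move (positions $p_a<p_b$ now carry values $b>a$); (ii) every edge incident to $p_b$ survives — the only potentially bad neighbour $y>p_b$ of $p_b$ satisfies $\sigma(y)<a$, because a value strictly between $a$ and $b$ would have to lie left of $p_b$; and (iii) the only edges that can be destroyed are edges $y p_a$ with $y<p_a$ and $a<\sigma(y)<b$, and for each such $y$ the pair $y p_b$ is an edge after the move, since $y<p_b$ and $\sigma(y)>a$ (the new value at $p_b$).

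Given (i)--(iii), I would finish as follows. Take any path $P$ in the old graph joining two vertices $u$ and $v$. Every edge of $P$ not incident to $p_a$ survives by the above, so the only repairs needed occur at appearances of $p_a$ along $P$: if a neighbour $y$ of $p_a$ on $P$ is no longer adjacent to $p_a$, then $y$ is of the exceptional type of (iii), hence adjacent to $p_b$, which by (i) is adjacent to $p_a$; so replace the step $y\text{--}p_a$ by $y\text{--}p_b\text{--}p_a$. Carrying this out at each such spot yields a walk, hence a path, from $u$ to $v$ in the new graph (this also covers the degenerate cases where $u$ or $v$ equals $p_a$ or $p_b$). Thus any two vertices that were connected stay connected, and since no vertex is lost no component can split, which is the ``in particular'' assertion.

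The main obstacle I anticipate is entirely the bookkeeping in the middle step: correctly using the Type II constraint ``every value strictly between $a$ and $b$ lies left of $p_b$'' to pin down precisely which edges disappear and to locate the replacement edge through $p_b$, all while keeping the position-versus-value conventions straight. This is the same rerouting mechanism that drives the case analysis in the proof of Proposition~\ref{prop: preserveCycles}; indeed, one could instead deduce the present statement directly from that proof by applying its recipe to a path rather than to a cycle.
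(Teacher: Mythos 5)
Your proof is correct and takes essentially the same route as the paper: Type I follows from Proposition~\ref{prop: TypeIgraph}, and for Type II both arguments observe that the only edges that can be lost are at the two swapped entries and that any such loss can be rerouted through the newly created edge between them, then iterate along a path. You merely make the bookkeeping from the case analysis of Proposition~\ref{prop: moreEdges} explicit (your claims (i)--(iii)) where the paper cites it.
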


\begin{proof}
For Type I moves, this follows immediately from Proposition \ref{prop: TypeIgraph}. 

For Type II moves we just have to notice that if $w_i$ and $w_j$ are adjacent to each other, there is still a path between them after an application of Type II moves. We cannot apply a Type II move between $w_i$ and $w_j$ because they are connected.  If we apply a Type II move to $w_i$ and some other $w_k$ at least one of $w_i$ or $w_k$ is connected to $w_j$ (one can see this from Proposition \ref{prop: moreEdges}), so we still have a path from $w_i$ to $w_j$ because $w_i$ and $w_k$ are connected. An identical argument shows we still have a path from $w_i$ to $w_j$ if we swap $w_j$ and some $w_k$. Finally, if we apply a Type II move to two vertices neither of which are $w_i$ or $w_j$, then $w_i$ and $w_j$ are still clearly connected.

Now, if $w_i$ and $w_j$ are connected by some path $(w_i, w_{\alpha_1}, \dots, w_{\alpha_n}, w_j)$, iterating the above argument for each edge in the path shows $w_i$ and $w_j$ are still connected after any Type II move.
%
%
\end{proof}

\section{The counterexample} \label{sec: counterEx}

\noindent We are now ready to present the counterexample to the noetherianity of the poset $(\cM,\sqsubseteq)$. The idea behind the counterexample is as follows. We showed that two perfect matchings $\Gamma,\Gamma'$ are comparable, i.e. $\Gamma \sqsubset \Gamma'$, if and only if their corresponding permutations $p_\Gamma, p_{\Gamma'}$ are comparable, i.e. $p_\Gamma \leq_i p_{\Gamma'}$ (Proposition \ref{prop: matchingConnection}). 
%
%

So to prove that $(\cM,\sqsubseteq)$ is not noetherian, it suffices to produce an infinite chain of graphs $G_1,G_2,\dots$, prove that these graph are permutation graphs by labeling them, and argue that for any $G_i$ and $G_j$ with $i < j$, we cannot transform $G_i$ into $G_j$ by applying Type I and II moves to the underlying permutations. We use graphs, because it is much easier to work with what Type I and II moves induce on the graph theoretic side, than to work with the permutations themselves. Furthermore, we make this connection to graph theory because there are many well studied antichains on graphs and we will use one such antichain to produce an antichain in $(\cM,\sqsubseteq)$.

\begin{theorem}
The poset $(\cM,\sqsubseteq)$ is not noetherian.
\end{theorem}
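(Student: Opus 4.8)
The plan is to transport the question into graph theory and exhibit an explicit infinite antichain there. By Proposition~\ref{prop: matchingConnection} it suffices to produce an infinite antichain of permutations under $\leq_i$, and by Proposition~\ref{prop: TypeIgraph} together with the translation of the two move-types recorded in \S\ref{sec: settingUp} it is cleaner to do this through permutation graphs. So I would fix a well-known infinite antichain $G_1,G_2,\dots$ of \emph{trees} for the subgraph relation --- trees being the classical example of a family that fails to be well-quasi-ordered by the subgraph relation --- chosen so that each $G_n$ is moreover a permutation graph. A convenient choice is the family of ``double-broom'' caterpillars: a spine $s_0\,s_1\cdots s_{n+1}$ with two pendant leaves at each of $s_0$ and $s_{n+1}$. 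Each such graph is a permutation graph because caterpillars are, and one exhibits an explicit vertex labeling satisfying \ref{P1}--\ref{P2} (for instance, for $n=0$ the labeling coming from the permutation $412563$), which produces a permutation $\sigma_n$ and hence an intertwined perfect matching $\Gamma_n$ with $p_{\Gamma_n}=\sigma_n$. That $\{G_n\}$ is a subgraph antichain follows from a short degree-and-distance argument: the only vertices of degree $\ge 3$ in $G_n$ are its two broom centres, which lie at distance $n+1$ from one another, and in a subgraph degrees can only drop while distances can only grow, so $G_m$ cannot sit inside $G_n$ when $m\ne n$.

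The crux is to show $\sigma_m\not\leq_i\sigma_n$ for $m\ne n$. Suppose $\sigma_m\leq_i\sigma_n$ and fix a witnessing sequence of Type~I and Type~II moves, with associated permutation graphs $G_m=H_0,H_1,\dots,H_k=G_n$. A Type~I move passes to a graph containing the previous one as an induced subgraph (Proposition~\ref{prop: TypeIgraph}), while a Type~II move keeps the vertex set, strictly increases the number of edges, and preserves all cycles and all pairwise connectivity (Propositions~\ref{prop: moreEdges}, \ref{prop: preserveCycles}, \ref{prop: connected}). Since $G_n$ is a tree, Proposition~\ref{prop: preserveCycles} forces every $H_i$ to be acyclic, i.e.\ a forest; this is the decisive restriction, since a Type~II move applied to a forest strictly increases the edge count while only ever merging connected components, so it can avoid creating a (permanent, again by Proposition~\ref{prop: preserveCycles}) cycle only in a very constrained, component-merging way. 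Tracking through the sequence the component $T_i\subseteq H_i$ that contains the vertices of $G_m$ --- well-defined by Proposition~\ref{prop: connected}, equal to $G_m$ at the start and to $G_n$ at the end --- one argues that $T_i$ can only grow ``tamely'', and in particular that $G_n$ would then have to contain $G_m$ in a way forbidden by the antichain property just established. This contradiction shows the $\sigma_n$ are pairwise $\leq_i$-incomparable, so $\{\Gamma_n\}_{n}$ is an infinite antichain in $(\cM,\sqsubseteq)$, whence $(\cM,\sqsubseteq)$ is not noetherian.

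The main obstacle is precisely this last bookkeeping step. The partial order that Type~I and Type~II moves induce on permutation graphs is \emph{not} the subgraph order: a Type~II move genuinely deletes edges --- it redirects an edge $\{v,a\}$ to $\{v,b\}$ while adjoining $\{a,b\}$, as the case analysis in Proposition~\ref{prop: moreEdges} makes explicit --- so one cannot merely quote ``$\{G_n\}$ is a subgraph antichain'' and conclude. Making the argument rigorous means carefully controlling how acyclicity, connectivity, and the edge count co-evolve under an arbitrary interleaving of the two move-types, and then invoking the rigidity of the chosen family of trees to close the loop. The choice of family is itself constrained: a permutation graph can contain neither a long induced odd cycle nor an asteroidal triple, which rules out many of the standard subgraph antichains (such as the cycles) and is part of why a family of caterpillars is natural here.
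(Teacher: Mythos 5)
Your overall strategy is the same as the paper's, and you have correctly identified the right ingredients: the fork/double-broom family is exactly the paper's antichain $F_{2n}$ (your $n=0$ case and the permutation $412563$ matches the paper's $F_2$), and Proposition~\ref{prop: preserveCycles} is indeed the decisive constraint, since it forces every intermediate permutation graph to remain a forest. Your remarks about the subtlety — that $\leq_i$ does not imply the subgraph relation, and that one therefore cannot just quote the subgraph-antichain property — are also exactly right, and mirror the paper's own observation.

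However, there is a genuine gap, and you have located it yourself without closing it. The sentence ``one argues that $T_i$ can only grow `tamely', and in particular that $G_n$ would then have to contain $G_m$ in a way forbidden by the antichain property'' is not an argument — it is the statement of the thing that has to be proved. ``$G_m$ is not a subgraph of $G_n$'' gives nothing directly, because (as you yourself point out) the move sequence need not trace a chain of subgraph containments. The paper's actual proof of the $\leq_i$-incomparability is the bulk of the work, and it is a concrete structural analysis you do not carry out: one shows that the only isolated vertices Type~I moves can create are at the two ends of the permutation (the ``pivot vertices''), that by Lemma~\ref{lem: involve0} every Type~II move must involve at least one pivot vertex, that the two fork vertices always retain degree $\geq 3$ and hence must map to the fork vertices of the target, and finally (Lemma~\ref{lem: pathFixed}) that swapping a pivot vertex with any vertex on the spine between the fork vertices necessarily creates a cycle. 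It is this last lemma that makes it impossible to lengthen the spine and hence impossible to reach $F_{2m}$ from $F_{2n}$. Your proposal gestures at ``component merging'' and ``tame growth'' but never shows that the spine cannot be extended, which is the actual obstruction; a careless reading would even suggest that merging a pivot component onto the middle of the spine could lengthen it, and ruling that out is precisely Lemma~\ref{lem: pathFixed}. One additional small imprecision: a Type~II move between vertices in \emph{different} components can still create a cycle (e.g.\ if the swap adds more than one edge across the two components), so ``only ever merging connected components'' is too weak a description on its own; the paper handles this by the explicit cycle computation in Lemma~\ref{lem: pathFixed} rather than by a general forest-merging principle.
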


We break this proof into many small pieces to make it easier to follow. We begin by presenting a chain of graphs. We argue these graphs are permutation graphs and associate a permutation to each one. Then we present each piece of the proof that these permutations are not comparable under $\leq_i$ as Lemmas and use this to show the chain we start with is an antichain. Throughout this proof, when we speak of applying a Type II move to two vertices in a graph, we mean applying the Type II move to the labels in the underlying permutation and tracking what this does to the permutation graph. We are always actually working with the permutations, but using the graphs to keep track of the inversions in the permutations.

\begin{proof}
A well known antichain for the subgraph order on graphs is the fork antichain $F_1,F_2,\dots$ where $F_k$ is the graph
\[
\begin{tikzcd}
\bullet \ar[dash,dr]&&&&&&\bullet \ar[dash,dl]\\
&\bullet \ar[dash,r] \ar[dash,dl]&\bullet \ar[dash,r]&\dots\ar[dash,r]&\bullet \ar[dash,r]&\bullet \ar[dash,dr]&\\
\bullet &&&&&&\bullet
\end{tikzcd}
\]
i.e. the path on $k$ vertices with an additional two vertices connected to both the beginning and end of the path. We call the degree $3$ vertices at the beginning and end of the fork the left and right \defn{fork vertices} respectively, and the degree $1$ vertices the \defn{leaves of the fork}. We will also use this as an antichain for our poset. 

\begin{lemma} \label{lem: permAssoc}
Let $F_{2n}$ represent the fork on $2n+4$ vertices. Each $F_{2n}$ is a permutation graph, and to it we can associate a permutation $p_{2n}$.
\end{lemma}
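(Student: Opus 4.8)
The plan is to prove $F_{2n}$ is a permutation graph by exhibiting an explicit linear ordering of its $2n+4$ vertices and invoking the Koh--Ree criterion of \cite{permGraph}: relative to such an ordering, with edges written as ordered pairs $(i,j)$ with $i<j$, it suffices to check properties \prop{P1} and \prop{P2}. The permutation $p_{2n}$ is then read off from the ordering, by recording which pairs of positions are, and are not, inversions.

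First I would fix the ordering. The obstruction to realizing a path as a permutation graph is transitivity \prop{P1}: one cannot list the spine $u_1,\dots,u_{2n}$ in path order, since $u_1u_2,u_2u_3\in E$ would force $u_1u_3\in E$. The standard fix is to group the spine into consecutive pairs $\{u_1,u_2\},\{u_3,u_4\},\dots$, list the pairs in order, and reverse each pair; the only ``long'' links are then $u_2u_3,u_4u_5,\dots$, each spanning a gap of three positions. I would then place the two leaves at the left fork vertex at the very start of the ordering and the two leaves at the right fork vertex at the very end, so that each fork vertex is an \emph{extreme} among its three (pairwise non-adjacent) neighbours, which is exactly the local condition \prop{P1} imposes on such a vertex. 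This is also why the spine must have even length $2n$: the pairing of spine vertices, together with attaching leaves symmetrically at both ends, works out cleanly only in this parity.

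With the ordering fixed, the verification is bookkeeping. For \prop{P1}, one checks that the set of larger endpoints of edges is disjoint from the set of smaller endpoints, so transitivity holds vacuously; the zig-zag pairing of spine vertices is precisely what makes this true. For \prop{P2}, the only edges $(i,k)$ with a vertex strictly between them are the long spine links and the edges from a fork vertex to its far leaf, and in each of these cases one of the short links adjacent to it supplies the required edge. I would organize this as an induction on $n$: passing from $F_{2n}$ to $F_{2n+2}$ amounts to inserting one new matched pair of spine vertices into the interior of the ordering, which visibly preserves \prop{P1} and \prop{P2} while lengthening the path by one edge. Having verified the criterion, I would write out the resulting one-line notation and record it as the definition of $p_{2n}$ (for a suitable such ordering one gets, for instance, $23518467$ for $n=2$ and $2\,3\,5\,1\,7\,4\,10\,6\,8\,9$ for $n=3$), since the later lemmas argue directly with this permutation.

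\textbf{Main obstacle.} The crux is the first step: choosing the labeling so that \prop{P1} becomes vacuous and \prop{P2} reduces to a few transparent cases, uniformly in $n$. Once the right ordering is in hand, the rest is a finite check, or a one-line induction step. A softer alternative is to note that $F_{2n}$ is a caterpillar tree and that caterpillars are permutation graphs, giving existence immediately; but since the subsequent argument manipulates a concrete $p_{2n}$, I would in any case produce the permutation explicitly.
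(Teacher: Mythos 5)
Your proposed labeling is precisely the one the paper uses (leaves $1,2$ at the left, $2n+3,2n+4$ at the right, spine labeled $4,3,6,5,\dots,2n+2,2n+1$ by pairing and reversing), and your verification of \prop{P1} (the set of larger edge-endpoints is disjoint from the set of smaller edge-endpoints, so transitivity holds vacuously) and of \prop{P2} (only the long spine links and the fork-to-leaf edges have intermediate vertices, handled by the adjacent short links) matches the paper's argument, which does the \prop{P2} case check directly rather than by the induction you sketch. One small point worth flagging: your sample permutations $23518467$ and $2\,3\,5\,1\,7\,4\,10\,6\,8\,9$ are the inverses of those recorded in the paper ($41263785$, etc.) — the paper reads the permutation off the labeled picture in the dual (label-indexed rather than position-indexed) convention — which is harmless here since a permutation and its inverse have the same permutation graph, but it changes the explicit one-line notation used in the later lemmas.
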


\begin{proof}
We find $p_{2n}$ by labeling the vertices of $F_{2n}$. This both proves $F_{2n}$ is a permutation graph and associates a particular permutation to $F_{2n}$. 

Permutation graphs are characterized by \cite[Theorem 3.2]{permGraph}, so it suffices to show there is a labeling of the vertices in the fork that satisfies \prop{P1} and \prop{P2} as seen in Section \ref{sec: permGraphs}. One such labeling for $F_{2n}$ is given by: leaves on the left fork vertex labeled by $1$ and $2$, leaves on the right fork vertex labeled by $2n+3,2n+4$ and the path in between the fork vertices alternating with the pattern $4,3,6,5,\dots,2n+2,2n+1$ from left to right.

Indeed, this labeling satisfies \prop{P1} because there is no increasing path of length $3$ or greater in the graph. That is, for any edge $(i,j)$ with $i < j$ there is never an edge $(j,k)$ with $j < k$, so we trivially satisfy the transitivity property.

As for \prop{P2} one can first verify that the forks satisfy this property. The left leaves are always labeled with $1$ and $2$ and the left fork connected to them is labeled $4$. But the edges $(3,4)$ and $(2,4)$ are in the graph, so \prop{P2} is satisfied here. The exact same analysis shows \prop{P2} is satisfied by the right fork. As for the path connecting the fork vertices, for $2 \leq k \leq n+1$, there are edges of the form $(2k,2k-1)$ and for $2 \leq k \leq n$ there are edges of the form $(2k-1,2k+2)$. The first type of edge trivially satisfies \prop{P2}. For the second type of edge, notice that for $2 \leq k \leq n$, $2k-1$ is always connected to $2k$, and $2k+2$ is connected to $2k+1$. This shows \prop{P2} is also satisfied for these vertices. This covers all possible cases. 
\end{proof}

Throughout this proof, $F_{2n}$ will represent the fork graph on $2n+4$ vertices and $p_{2n}$ will represent the permutation associated to $F_{2n}$ in Lemma \ref{lem: permAssoc}. There is a similar labeling for $F_{2n+1}$, but the even forks suffice to produce an antichain. 

We will argue that we cannot transform $F_{2n}$ into any $F_{2m}$ using Type I or II moves on the corresponding permutations. We will often make use of the fact that Type I moves imply the induced subgraph relation and that Type II moves strictly increase the number of edges while maintaining the number of vertices.

To get from $F_{2n}$ to $F_{2m}$ we must perform $2(m-n)$ Type I moves, to add $2(m-n)$ vertices, since Type II moves do not add vertices. The number of Type II moves we are allowed to perform is bounded above by $2(m-n) - \beta$ where $\beta$ is the number of edges we gain from the Type I moves. This follows from Proposition \ref{prop: moreEdges} because Type II moves always increase the number of edges. We now need a result about which Type I moves add a vertex without adding any edges.

\begin{lemma}
The only Type I moves we can perform to $p_{2n}$ that do not also add an edge to the permutation graph are when we shift all the elements up by $\ell$ and add $12\cdots \ell$ to the beginning of the permutation, or add $(2n+5)(2n+6) \cdots (2n+k)$ to the end of the permutation. 
\end{lemma}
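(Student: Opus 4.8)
The plan is to describe the reachable set rather than individual moves. I will show that the permutations $\tau$ reachable from $p_{2n}$ by Type I moves --- equivalently, by Proposition \ref{prop: TypeIequiv}, such that the word of $p_{2n}$ is order isomorphic to a dominated subword of the word of $\tau$ --- whose permutation graph $G_\tau$ has no more edges than $F_{2n}=G_{p_{2n}}$ (Lemma \ref{lem: permAssoc}) are exactly those of the form $\tau = 1^{\oplus a}\oplus p_{2n}\oplus 1^{\oplus b}$, where $\sigma\oplus\rho$ denotes the direct sum of permutations (first block $\sigma$ on the smallest values, second block $\rho$ on the rest) and $1^{\oplus a}$ is the increasing run of length $a$. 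Equivalently, these are the permutations obtained from $p_{2n}$ by prepending an increasing run of new minima (``shift every letter up by $a$ and prepend $1\,2\cdots a$'') and appending an increasing run of new maxima (``append the $b$ next-largest values''), which is the content of the lemma.

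The two reductions I would isolate are: (1) by Proposition \ref{prop: TypeIgraph} a Type I move embeds the permutation graph as an \emph{induced} subgraph, so $F_{2n}$ is an induced subgraph of $G_\tau$ and ``adds no edge'' is just the statement $|E(G_\tau)|=|E(F_{2n})|$; and (2) a position $v$ of a permutation $\sigma$ is isolated in $G_\sigma$ if and only if $\sigma$ has a direct-sum split at $v$ (i.e. $\sigma(v)=v$ and $\{\sigma(1),\dots,\sigma(v-1)\}=\{1,\dots,v-1\}$), and more generally no edge of $G_\sigma$ crosses a direct-sum split of $\sigma$; in particular a connected subgraph of $G_\sigma$ lies inside a single $\oplus$-indecomposable block of $\sigma$.

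With these in hand the argument runs as follows. Since $F_{2n}$ is connected, its copy inside $G_\tau$ lies in one $\oplus$-indecomposable block $B$ of $\tau$, and $|E(G_B)|=|E(F_{2n})|$, being squeezed between $|E(F_{2n})|$ and $|E(G_\tau)|$. If $B$ had a vertex outside the $F_{2n}$-copy it would be isolated in $G_B$ (the induced subgraph on the copy already carries all of $G_B$'s edges), making $B$ direct-sum decomposable, a contradiction; hence $G_B=F_{2n}$ and $B$ has length $2n+4$. As $B$ contains $p_{2n}$ as a pattern of its own length, $B=p_{2n}$. Every other $\oplus$-indecomposable block of $\tau$ has a graph with no edges (all edges lie in $G_B$), hence has an isolated vertex unless it has length one; being $\oplus$-indecomposable it must have length one. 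Therefore $\tau=1^{\oplus a}\oplus p_{2n}\oplus 1^{\oplus b}$. Conversely every such $\tau$ is reachable: its middle block is $p_{2n}$ shifted up by $a$, so $p_{2n}$ occurs in $\tau$ as a subword with each letter dominated, which is exactly the hypothesis of Proposition \ref{prop: TypeIequiv}, and plainly $G_\tau$ is $F_{2n}$ together with $a+b$ isolated vertices.

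The one point that needs genuine care is that $p_{2n}$ is $\oplus$-indecomposable, and this is precisely where connectedness of the fork is essential: a proper split $p_{2n}=\gamma\oplus\delta$ would give $F_{2n}=G_\gamma\sqcup G_\delta$, which is disconnected, contradicting Lemma \ref{lem: permAssoc} and the fact that the fork graph is connected. Beyond that, the work is careful bookkeeping with direct-sum decompositions --- tracking how relabelling shifts letters when a new minimum is inserted, and checking that inserting a letter anywhere strictly inside (or at an endpoint of) the shifted $p_{2n}$ block forces an inversion --- and I do not foresee a conceptual obstacle there.
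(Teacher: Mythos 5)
Your argument is correct, and it rests on the same essential observation as the paper's proof: a degree-zero vertex in a permutation graph forces a direct-sum split of the permutation at that position (everything to the left smaller, everything to the right larger), and since $F_{2n}$ is connected, any such split must occur at the extreme ends. The paper states this directly in a three-sentence sketch about inserting a single element; you reformulate it through the $\oplus$-decomposition of permutations, characterizing the entire Type I--reachable set with no new edges as $1^{\oplus a}\oplus p_{2n}\oplus 1^{\oplus b}$. Your version is more systematic and arguably tighter, because it handles an arbitrary sequence of insertions and shifts in one stroke and also supplies the converse (that every such $\tau$ is in fact reachable), which the paper leaves implicit; it also explicitly pins down that $p_{2n}$ itself is $\oplus$-indecomposable, again by connectedness. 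The extra machinery buys clarity and completeness rather than a genuinely different idea, so I would call this the same approach carried out with more care.
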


\begin{proof}
Since $F_{2n}$ is connected, if we try to add an element somewhere in the middle of the permutation, the only way this new vertex would have valence $0$ is if all the elements to the left of it in the permutation were less than it and all the elements to the right of it were larger than it. However, this implies the corresponding permutation graph is disconnected, which is not true for any $F_{2n}$.
\end{proof}

We call these new degree $0$ vertices we can add via Type I moves \defn{pivot vertices}. To summarize, the number of Type II moves we are allowed to perform is bounded above by the number of pivot vertices we add, and all pivot vertices are necessarily labeled by elements either strictly smaller or strictly larger than all the original elements from $F_{2n}$.
%
%
%
%

We cannot use Type I moves to transform $F_{2n}$ into $F_{2m}$ because $F_{2n}$ is not an induced subgraph of $F_{2m}$, indeed it is not even a subgraph of $F_{2m}$. So we must perform Type II moves at some point.
 
This implies we must add some pivot vertices to $F_{2n}$. But then we need to perform Type II moves to connect these vertices to the pre-existing graph. We can actually say something stronger, 
 
 \begin{lemma} \label{lem: involve0}
Whenever we apply a Type II move, it must involve two vertices that are not connected by any path.
\end{lemma}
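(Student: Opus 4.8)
The plan is to combine two facts about any transformation of $F_{2n}$ into $F_{2m}$: a cycle, once present, can never be destroyed, but the target graph is acyclic. First I would record that each fork graph is a tree — a path with two extra leaves hanging off of each of its two end vertices — hence connected and acyclic. By Proposition~\ref{prop: preserveCycles}, a cycle in any graph appearing along a sequence of Type~I and Type~II moves out of $F_{2n}$ would persist through every later move and hence into $F_{2m}$; since $F_{2m}$ has no cycle, no intermediate graph can have one either.

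Now suppose, for contradiction, that at some stage we apply a Type~II move whose two vertices $u,v$ are joined by a path in the current graph $G$. A Type~II move swaps a non-inversion, so $u$ and $v$ are non-adjacent in $G$, and therefore there is a $u$--$v$ path $P=u\,y_1\,y_2\cdots y_k\,v$ of length at least $2$, i.e.\ with $k\ge 1$. By Proposition~\ref{prop: moreEdges} the move adds the edge $\{u,v\}$, and — as one sees from the case analysis in the proof of that proposition — every edge it deletes is incident to $u$ or to $v$. Hence the interior edges $\{y_1,y_2\},\dots,\{y_{k-1},y_k\}$ of $P$ are untouched, while, by the mechanism in the proof of Proposition~\ref{prop: connected}, each of $y_1$ (adjacent to $u$ in $G$) and $y_k$ (adjacent to $v$ in $G$) remains, in the resulting graph $G'$, adjacent to $u$ or to $v$. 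A short case split on these two dichotomies then exhibits inside $G'$, and avoiding the new edge $\{u,v\}$, either a walk from $u$ to $v$ (when the two reattachments go to opposite ends) or a closed walk on the vertex set $\{u,y_1,\dots,y_k\}$ or $\{v,y_1,\dots,y_k\}$ (when they go to the same end and $k\ge 2$); together with the edge $\{u,v\}$ this yields a cycle in $G'$, contradicting the first paragraph.

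The remaining case is $k=1$ with both reattachments going to the same vertex, i.e.\ $P=u\,y_1\,v$ with $y_1$ adjacent to both $u$ and $v$. I would finish this by descending to the permutation: writing $i<j$ for the two swapped values, with $u,v$ their positions, the requirement that $y_1$ be adjacent to both $u$ and $v$ forces $y_1$ to lie either to the left of $u$ with value greater than $j$, or to the right of $v$ with value less than $i$ (the ``in between'' position is incompatible with both adjacencies). In either configuration a direct check shows $y_1$ is still adjacent to both $u$ and $v$ after the swap, so $\{u,v,y_1\}$ spans a triangle in $G'$ — again a cycle, and the same contradiction.

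I expect the real work to sit in the middle two paragraphs: identifying precisely which edges of the old path survive a Type~II move — since such a move can delete edges at its two vertices, one must be sure only the two ends of $P$ are ever at risk — and dispatching the length-$2$ case by hand. Everything else follows immediately from Propositions~\ref{prop: preserveCycles}, \ref{prop: moreEdges} and~\ref{prop: connected} and the fact that trees contain no cycles.
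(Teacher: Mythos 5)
Your argument is correct, but it takes a genuinely different route from the paper's. Both proofs begin the same way: since $F_{2m}$ is a tree, Proposition~\ref{prop: preserveCycles} forbids any intermediate graph from acquiring a cycle, so it suffices to show that a Type~II move on two path-connected vertices creates one. Where you diverge is in \emph{how} the cycle is produced. The paper's proof is a short global count: the maximal connected subgraph containing $u$ and $v$ has $N$ vertices and hence at least $N-1$ edges, a Type~II move strictly increases the number of edges in that subgraph (by Proposition~\ref{prop: moreEdges}), and a connected graph on $N$ vertices with $\ge N$ edges contains a cycle. You instead work locally: you fix the (unique, since the graph is a forest) $u$--$v$ path, observe that interior edges are untouched because every deleted edge is incident to $u$ or $v$, use the mechanism of Proposition~\ref{prop: connected} to reattach the two ends, and then exhibit the cycle by an explicit case split, with a separate hand check of the length-two case. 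The paper's Euler-type count is shorter and requires no case analysis, though the step ``edges within the component increase'' quietly relies on the fact that every deleted edge is replaced by an added edge to the same third vertex (visible in the proof of Proposition~\ref{prop: moreEdges}); your version makes the cycle explicit and so does not need that bookkeeping, at the cost of the case split in the middle two paragraphs. Both are valid; the paper's is more economical, yours is more constructive.
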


\begin{proof}
If we performed a Type II move on two vertices which are connected by a path, this means they are both part of a connected subgraph. Consider the maximal connected subgraph they are a part of. Suppose this subgraph has $N$ vertices. Since it is connected, it has at least $N-1$ edges. A Type II move increases the number of edges in the subgraph. We therefore create a cycle in this subgraph. Proposition \ref{prop: preserveCycles} then implies any subsequent Type I or II moves will preserve this cycle, so the resulting graph could not be a tree, i.e. the resulting graph could not be any $F_{2m}$.
\end{proof}

%
%
%

An immediate corollary is that whenever we perform a Type II move, it must involve at least one pivot vertex because all other vertices are automatically part of $F_{2n}$ and therefore part of the same connected subgraph and there is no way to separate these vertices (Proposition \ref{prop: connected}).

%
%
%

The key observation, now, is that we cannot use Type I or Type II moves to remove or change the fork vertex at the beginning or end of the graph. By symmetry, it suffices to consider the right fork. These forks correspond to the sub-permutation $(2n+2)(2n+3)(2n+4)(2n+1)$. When we add pivot vertices, the end of the permutation becomes
 \[
 (2n+2)(2n+3)(2n+4)(2n+1)(2n+5)(2n+6)\cdots(2n+k).
 \]
It is clear that Type I moves do not change or remove the fork vertex. When we apply an allowable Type II move involving at least one of the pivot vertices, $2n+1$ will still always have three larger entries appearing before it. Indeed, if we tried to swap $(2n+1)$ with a pivot vertex added at the beginning of the permutation, that pivot vertex would become connected to every other vertex in $F_{2n}$. This clearly creates a cycle, which cannot occur by Proposition \ref{prop: preserveCycles}. If we swap $(2n+1)$ with a pivot vertex added at the end of the permutation, this only increases the number of larger entries appearing before $(2n+1)$. Any Type II move applied to a vertex other than $(2n+1)$ with a pivot vertex can only increase the number of larger entries appearing before $(2n+1)$, since the only way to move an entry that is larger than $(2n+1)$ to its right is to replace it with an even larger entry.

This means after any application of allowable Type I and II moves, the image of $(2n+1)$ will have valence at least $3$. A similar argument shows that the image of $4$ will also have valence $3$. Only two vertices in any fork graph have this property, the fork vertices. This implies that using Type I and II moves, we must always send the fork vertices to fork vertices. As a result, the only way to send $F_{2n}$ to $F_{2m}$ is to extend the path between the two fork vertices. 

To do this, we must perform a Type II move on one of the vertices in the path between the two fork vertices and a pivot vertex. Indeed, we cannot accomplish this with just Type I moves because $F_{2n}$ is not an induced subgraph of $F_{2m}$. This implies we must use both Type I and II moves.  We showed that when we apply a Type II move, it must involve at least one pivot vertex in Lemma \ref{lem: involve0}. If the Type II move did not involve a vertex from the path between the two fork vertices, the path of length $2n$ between the two pivot vertices in $F_{2n}$ would remain, but the minimal path between the two fork vertices in any $F_{2m}$ is length $2m > 2n$. However,

\begin{lemma} \label{lem: pathFixed}
If we try to apply a Type II move swapping any pivot vertex with any vertex in the path between the fork vertices, this will create a cycle.
\end{lemma}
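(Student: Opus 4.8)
The goal is to prove Lemma \ref{lem: pathFixed}: swapping a pivot vertex with a path vertex creates a cycle. I outline the strategy.

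\textbf{Setup and strategy.} The plan is to analyze the structure of the permutation $p_{2n}$ near the path vertices, then show that the Type II swap with a pivot vertex introduces enough new inversions to close a cycle, after which Proposition \ref{prop: preserveCycles} finishes the job (since no $F_{2m}$ contains a cycle). Recall that in the labeling from Lemma \ref{lem: permAssoc} the path between the fork vertices is labeled $4,3,6,5,\dots,2n+2,2n+1$ from left to right, with the left leaves labeled $1,2$ and the right leaves $2n+3,2n+4$; the path edges are $(2k,2k-1)$ for $2\le k\le n+1$ and $(2k-1,2k+2)$ for $2\le k\le n$, so each path vertex already has degree $2$ (or $3$ at the ends, where it meets a fork vertex). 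Pivot vertices are, by the earlier discussion, labeled either by values $1,\dots,\ell$ (prepended, shifting everything up) or by values $>2n+4$ (appended). Since prepending a pivot shifts all existing labels, I would fix notation so that after adding pivots the original label $m$ becomes $m+\ell$; to avoid clutter I'd argue at the level of the permutation word and its inversions directly rather than re-indexing.

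\textbf{Key steps.} First I would reduce to the case of a single pivot vertex $v$ being swapped with a single path vertex $u$: any Type II move involves exactly two entries, so this is automatic. Second, split into the two cases for $v$: $v$ a \emph{small} pivot (value below everything, sitting at the front of the word) or $v$ a \emph{large} pivot (value above everything, sitting at the back). In the small-pivot case: to legally apply a Type II move swapping the small value $v$ with the path value $u=\sigma(a)$, since $v$ appears first and is smallest, every entry strictly between $v$ and $u$ in value lies to the left of $u$ — but after the swap $v$ moves to position $a$ and now forms an inversion with \emph{every} entry to its left that is larger, which by construction of the move includes all the path vertices to the left of $u$, all small pivots, and (crucially) the left fork vertex and left leaves, because those carry values that are not all smaller than $u$'s position-neighbors. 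I'd make this precise by exhibiting a concrete cycle: $v$ connects to two path vertices $u_1,u_2$ that were already adjacent along the path (or to $u$ and to the left fork vertex), and combined with the pre-existing path segment between them this is a cycle. The large-pivot case is symmetric using the right fork. Third, invoke Proposition \ref{prop: preserveCycles}: once a cycle appears, every subsequent Type I or Type II move preserves a cycle, so the terminal graph cannot be the tree $F_{2m}$ — contradiction. Hence no such swap can occur in a valid transformation sequence.

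\textbf{Main obstacle.} The delicate point is verifying that the new vertex $v$ genuinely becomes adjacent to \emph{at least two} vertices that are already joined by a path avoiding $v$, i.e. that we truly get a cycle and not merely a new pendant edge. This requires carefully tracking which entries end up to the left (resp. right) of $v$'s new position and checking the inversion count against the specific interleaved labeling $4,3,6,5,\dots$ of the path. The cleanest route is probably to observe that the path vertex $u$ we swap with has degree $\ge 2$ in $F_{2n}$ and that both of its neighbors $u_1,u_2$ satisfy the value/position inequalities forcing them to be inversions with $v$ after the swap — so $v$ is joined to both $u_1$ and $u_2$, which are connected through $u$ (now possibly relabeled but still present), yielding the triangle-or-longer cycle $v,u_1,u,u_2,v$. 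I would spell out the two or three inequality checks for the small- and large-pivot cases and then defer to Proposition \ref{prop: preserveCycles}.
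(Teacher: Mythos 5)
Your high-level strategy — exhibit a cycle created by the swap and then invoke Proposition \ref{prop: preserveCycles} — is exactly right and matches the paper. The small-pivot/large-pivot split is also the correct organization. The gap is in the concrete cycle you propose.

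In your ``cleanest route'' you claim the cycle $v,u_1,u,u_2,v$ where $u_1,u_2$ are the graph-neighbors of the swapped path vertex $u$. This cycle generally does not exist after the swap. The Type~II move physically transposes the positions of $v$ and $u$ in the word, so $u$ is dumped into $v$'s extremal position. Concretely, take $F_4$ with permutation $41263785$, append the pivot $9$ to get $412637859$, and swap $9$ with the internal path vertex $u=6$ (neighbors $u_1=3$, $u_2=5$). The result is $412937856$: now $6$ sits in the last position, where it is \emph{smaller} than everything in the right fork block $(7,8,5)$ sitting just to its left, so the former edges $\{6,3\}$ and $\{6,5\}$ have both vanished. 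Worse, even when an edge $\{u,u_i\}$ happens to survive, the edge $\{v,u_i\}$ can fail: swap $9$ with $u=3$ instead to get $412697853$; then $\{3,4\}$ and $\{3,6\}$ survive, but $\{9,4\}$ is not an edge because $9$ now sits at position $5$, \emph{after} $4$'s position $1$. So neither version of your proposed 4-cycle (nor the ``two already-adjacent path vertices'' variant) holds up.

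The paper sidesteps all of this by not looking at $u$ at all. Instead it observes that every internal path vertex appears in the word \emph{strictly before} the terminal block $(2n+3)(2n+4)(2n+1)$ — the two right leaves and the right fork vertex — and none of those three entries moves under the swap. After swapping a large pivot $\alpha$ into $u$'s spot, $\alpha$ sits before all three with $\alpha>2n+4$, so $\alpha$ is adjacent to $2n+1$, $2n+3$, and $2n+4$. The pre-existing fork edge $\{2n+1,2n+3\}$ is untouched, giving the triangle $\alpha,\,2n{+}1,\,2n{+}3,\,\alpha$. The small-pivot case is symmetric using the initial block $4,1,2$ (left fork vertex and leaves). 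You actually half-notice the right ingredient in your ``key steps'' paragraph when you say $v$ becomes adjacent to ``the left fork vertex and left leaves''; the fix is to use that observation directly — the fork vertex and one leaf already form an edge that survives, and the pivot hits both — rather than routing the cycle through $u$ and its former path neighbors, whose adjacencies the swap destroys.
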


\begin{proof}
If we try to swap a pivot vertex $\alpha$ added to the end of the permutation with a vertex in between the fork vertices, we have the subpermutation $\alpha (2n+3)(2n+4)(2n+1)$ with $\alpha$ larger than all the vertices from $F_{2n}$, so in particular $\alpha > 2n+4$. This contains the cycle $\alpha (2n+1)(2n+3)\alpha$. A similar argument works for a pivot vertex added to the beginning of the permutation.
\end{proof}

As a result, we cannot perform a Type II move between any pivot vertex and a vertex between the fork vertices by Proposition \ref{prop: preserveCycles}. As we already mentioned, in any fork graph $F_{2m}$, the minimal path between the two fork vertices is length $2m$. Lemma \ref{lem: pathFixed} shows we can never lengthen the path between two fork vertices using a combination of Type I and II moves, and we also cannot change the fork vertices using Type I or II moves. As a result, we can never transform the permutation corresponding to $F_{2n}$ into the permutation corresponding to $F_{2m}$. Stated another way, this chain of permutations is indeed an antichain.
%
\end{proof}

We follow this proof with many examples to illustrate the phenomenon appearing in the proof and to explore the counterexample itself.

\begin{example}
We show a few of the labeled even forks, $F_2$ is
\[
\begin{tikzcd}
\bullet^{2} \ar[dash,dr]&&&\bullet^{6} \ar[dash,dl]\\
&\bullet^{4} \ar[dash,r] \ar[dash,dl]&\bullet^{3} \ar[dash,dr]&\\
\bullet^{1} &&&\bullet^{5}
\end{tikzcd}.
\]
This is the permutation graph for $412563$. This then corresponds to the perfect matching
\[
\begin{tikzcd}[sep = small]
1\ar[bend left = 40,dash,rrrrrrrrrr]&2\ar[bend left = 40,dash,rrrrrrrr]&3\ar[bend left = 40, dash,rrrr]&4 \ar[bend left = 40,dash,rrrrrrrr]&5\ar[bend left = 40, dash, rrrr]&6 \ar[bend left = 40, dash, rr]&7&8&9&10&11&12.
\end{tikzcd}
\]
The next even fork, $F_4$, is
\[
\begin{tikzcd}
\bullet^{1} \ar[dash,dr]&&&&&\bullet^{7} \ar[dash,dl]\\
&\bullet^{4} \ar[dash,r] \ar[dash,dl]&\bullet^{3} \ar[dash,r] &\bullet^{6} \ar[dash,r]&\bullet^{5} \ar[dash,dr]&\\
\bullet^{2} &&&&&\bullet^{8}
\end{tikzcd},
\]
which is the permutation graph for $41263785$, which corresponds to the intertwined perfect matching,
\[
\begin{tikzcd}[sep = tiny]
1\ar[bend left = 40,dash,rrrrrrrrr]&2\ar[bend left = 40,dash,rrrrrrrrr]&3\ar[bend left = 40, dash,rrrrrrrrrr]&4 \ar[bend left = 40,dash,rrrrr]&5\ar[bend left = 40, dash, rrrrrrrrrrr]&6 \ar[bend left = 40, dash, rrrrrr]&7\ar[bend left = 40, dash, rrrrrrr]&8\ar[bend left = 40, dash, rrrrrrr]&9&10&11&12&13&14&15&16.
\end{tikzcd}
\]
As one can see, trying to just work with these perfect matchings is rather difficult. It is not clear how one might argue that no sequence of moves could transform the previous diagram into this one, but this is the case.
\end{example}

\begin{example}
Now suppose we wish to perform a Type I move to add a degree $0$ vertex to $F_2$. $F_2$ corresponds to the permutation $412563$ from our labeling. If we try to add any entry between the existing entries, we necessarily also add edges to the graph. For example suppose we perform the Type I move $412563 \to 4125673$ the resulting graph is
\[
\begin{tikzcd}
\bullet^{2} \ar[dash,dr]&&&\bullet^{6} \ar[dash,dl]\\
&\bullet^{4} \ar[dash,r] \ar[dash,dl]&\bullet^{3} \ar[dash,dr]\ar[dash,r]&\bullet^7\\
\bullet^{1} &&&\bullet^{5}
\end{tikzcd}.
\]
\end{example}

\begin{example}
If we again consider $F_2$, we will now explore the content of Lemma \ref{lem: involve0}. If we attempt to apply any Type II move to $412563$, Lemma \ref{lem: involve0} implies we will create a cycle. Indeed, suppose we try to swap $2$ and $3$. We then end up with the permutation $413562$ which corresponds to the graph 
\[
\begin{tikzcd}
\bullet^{2} \ar[dash,dr]&&&\bullet^{6} \ar[dash,lll]\\
&\bullet^{4} \ar[dash,r] \ar[dash,dl]&\bullet^{3} \ar[dash,ull]&\\
\bullet^{1} &&&\bullet^{5} \ar[dash,uulll]
\end{tikzcd}.
\]
This clearly contains the cycle $2432$.

Now let us consider Lemma \ref{lem: pathFixed}. We will add a pivot vertex to $F_4$, say we do this and get the permutation $412637859$. If we try to apply a Type II move with $9$ and any vertex on the path between the forks we necessarily get a cycle. Suppose we tried to use a Type II move to swap $9$ and $6$. We end up with the permutation $412937856$ which corresponds to the graph
\[
\begin{tikzcd}
\bullet^{1} \ar[dash,dr]&&&\bullet^9\ar[dash,dl]\ar[dash,d] \ar[dash,dr]\ar[dash,rr]\ar[dash,ddrr]&&\bullet^{7} \ar[dash,dl]\\
&\bullet^{4} \ar[dash,r] \ar[dash,dl]&\bullet^{3} &\bullet^{6} \ar[dash,urr]\ar[dash,drr]&\bullet^{5} \ar[dash,dr]&\\
\bullet^{2} &&&&&\bullet^{8}
\end{tikzcd},
\]
which has many cycles. Indeed, the only Type II moves that do not create a cycle involving $9$ are to swap it with $5$ or $8$ which respectively correspond to the graphs,
\[
\begin{tikzcd}[sep=small]
\bullet^{1} \ar[dash,dr]&&&&&\bullet^{7} \ar[dash,dl]\\
&\bullet^{4} \ar[dash,r] \ar[dash,dl]&\bullet^{3} \ar[dash,r] &\bullet^{6} \ar[dash,r]&\bullet^{5} \ar[dash,dr]\ar[dash,r]&\bullet^9\\
\bullet^{2} &&&&&\bullet^{8}
\end{tikzcd},
\quad
\begin{tikzcd}[sep=small]
\bullet^{1} \ar[dash,dr]&&&&&\bullet^{7} \ar[dash,dl]&\\
&\bullet^{4} \ar[dash,r] \ar[dash,dl]&\bullet^{3} \ar[dash,r] &\bullet^{6} \ar[dash,r]&\bullet^{5} \ar[dash,dr]&&\\
\bullet^{2} &&&&&\bullet^{9} \ar[dash,r]&\bullet^8
\end{tikzcd}
\]
This is the overarching idea behind the proof. We must end up with a connected tree, but because Type I and II moves preserve cycles, and Type II moves always add edges, many Type II moves on a tree would create a cycle, which severely limits when we can use them.
\end{example}


\section{Going forward} \label{sec: goingForward}
The proof that this chain of graphs yields a counterexample relies heavily on the fact that we do not have a move that maintains both the number of vertices and the number of edges. Type I moves always add vertices, and potentially edges. They are also very rigid in that they preserve the order of the original permutation. Type II moves always add more edges, but do not add any vertices. This suggests that additional moves are necessary to make $(\cM,\sqsubseteq)$ noetherian. In particular, one needs to add moves that do not add edges or vertices, but merely swap edges around. Such moves have the potential to break Proposition \ref{prop: preserveCycles} and therefore potentially break the counterexample.

For example, we believe the move $2341 \to 4123$ preserves initial ideals, which corresponds to
\[
\begin{tikzcd}[sep=small]
&\bullet^1\ar[dash,dd]\ar[dash,dr]\ar[dash,dl]&\\
\bullet^4&&\bullet^2\\
&\bullet^3&
\end{tikzcd}
\to
\begin{tikzcd}[sep=small]
&\bullet^1\ar[dash,dl]&\\
\bullet^4&&\bullet^2\ar[dash,ll]\\
&\bullet^3\ar[dash,ul]&
\end{tikzcd}.
\]
Another move that we believe preserves initial ideals is $231 \to 312$. This is similar to the previous new move in that it relates two permutations with the same permutation graph. This move would immediately break the counterexample because it would allow us to send $412563$, which is the permutation corresponding to $F_2$, to $412635$ which has permutation graph
\[
\begin{tikzcd}[sep=small]
\bullet^{2} \ar[dash,dr]&&&\bullet^{6} \ar[dash,dl]\ar[dash,dd]\\
&\bullet^{4} \ar[dash,r] \ar[dash,dl]&\bullet^{3}&\\
\bullet^{1} &&&\bullet^{5}
\end{tikzcd}.
\]
This graph is easily seen to be an induced subgraph of $F_4$. 

\subsection{Equivariant Initial Ideals}
Initial ideals have played an important role in classical commutative algebra. One can often derive many important properties of ideals and algebras from their initial counterparts. One key example of this is determinantal ideals, see \cite{detIdeals} for a nice survey. 

Recently, researchers have been investigating how classical areas of commutative algebra behave in an equivarant setting (often the equivariant analogues behave differently). For example, Snowden investigated $\GL$-prime ideals, i.e. prime ideals in tca's, and discovered an effective method for analyzing them \cite{tcaprimes}. The author and Snowden then expanded this to describe an effective method for analyzing equivariant prime ideals for infinite dimensional supergroups \cite{SuperPrimes}. Bik, Draisma, Eggermont and Snowden are also currently investigating $\GL$-varieties \cite{GLVarieties}.

Sam and Snowden laid the foundations for an equivariant Gr\"obner theory in \cite{grob}, but as we have seen these methods will need to be expanded to apply more generally. Taking cues from classical commutative algebra, if one wanted to develop a robust equivariant Gr\"obner theory, it would also be important to understand equivariant initial ideals. Indeed, one way to classify all possible moves is to understand the structure of initial ideals in $\Sym(\Sym^2(\bC^\infty))$, and in tca's more generally. Each move is a partial picture of the initial ideal structure. 

We are currently investigating exactly this for the tca $\Sym(\Sym^2(\bC^\infty))$. We now outline some other potential avenues for future work stemming from this paper.

\subsection{Are intertwined matchings enough?} The noetherianity of the subposet of intertwined perfect matchings is an easier problem to approach than the noetherianity of $\cM$. We actually believe the noetherianity of this subposet is a good indicator for the noetherianity of the original poset. In particular, we are investigating the following question,

\begin{question}
If the class of intertwined perfect matchings under some extension of $\sqsubseteq$ is well-quasi-ordered, then is $\cM$ also well-quasi-ordered?
\end{question}

When we say some extension, we mean adding additional types of moves. We now sketch the idea behind this question. Notice, we can separate any perfect matching into a collection of intertwined perfect matchings. One systematic way to do this is to read from left to right in the perfect matching, coloring the edges with color $c_1$ until we reach a terminal vertex. We then label the edge connected to the next origin vertex by $c_2$ and continue to do this until we reach a terminal vertex labeled by $c_2$, then repeat this process with $c_3$ etc. To see this process at work consider the graph
\[
\begin{tikzcd}
1\ar[dash, bend left = 30, rrrr]&2 \ar[dash, bend left = 30, r] &3 &4\ar[dash, bend left = 30,rrrr]&5&6\ar[dash,bend left = 30,r]&7&8.
\end{tikzcd}
\]
This breaks into the two intertwined perfect matchings colored as follows
\[
\begin{tikzcd}
1\ar[dash, bend left = 30, rrrr,color=red]&2 \ar[dash, bend left = 30, r,color=red] &3 &4\ar[dash, bend left = 30,rrrr,color=blue]&5&6\ar[dash,bend left = 30,r,color=blue]&7&8.
\end{tikzcd}
\]
The introduction of Type II moves implies that in any potential infinite antichain of perfect matchings in $(\cM, \sqsubseteq)$, there must be a bounded number of intertwined pieces in any of the perfect matchings that appear. Indeed, if this were not the case, it implies that for any $n \gg 0$, there is a perfect matching with a submatching of the form,
\[
\begin{tikzcd}
a_1\ar[dash, bend left = 30, r] &a_2 & a_3 \ar[dash, bend left = 30, r] & a_4 & \cdots &a_{n-1} \ar[dash, bend left = 30, r] & a_{n}
\end{tikzcd}
\]
with $a_1 < a_2 < \cdots < a_n$. If the first matching in the antichain is on $k$ vertices, we can use Type II moves to transform this matching into the matching
\[
\begin{tikzcd}
1\ar[dash, bend left = 30, r] &2 & 3 \ar[dash, bend left = 30, r] & 4 & \cdots &{k-1} \ar[dash, bend left = 30, r] & k.
\end{tikzcd}
\]
We can then use Type I moves to turn this matching into the previous one. Filling in all the other edges with more Type I moves shows that these two elements are comparable. As a result, in any infinite antichain, there must be a bound on the number of intertwined pieces. Equivalently, we can color the edges of the perfect matchings in any infinite antichain using a fixed bounded number of colors so that each color is an intertwined perfect matching. The idea, then, is to prove that noetherianity holds for each intertwined piece and argue that one can glue these pieces together to get noetherianity in general.

This is also an abstract way to argue that the antichain demonstrating that Type I moves are not enough to get noetherianity in \cite[Example 5.1]{sym2noeth} is a good chain when one introduces Type II moves. Indeed, it is not hard to see that there is no bound on the number of intertwined pieces, therefore we eventually have a comparison with the introduction of Type II moves.

The current issue with this approach is that one cannot completely break apart the intertwined pieces of a matching, for example in the graph we considered earlier,
\[
\begin{tikzcd}
1\ar[dash, bend left = 30, rrrr]&2 \ar[dash, bend left = 30, r] &3 &4\ar[dash, bend left = 30,rrrr]&5&6\ar[dash,bend left = 30,r]&7&8
\end{tikzcd}
\]
when trying to break up these diagrams into pieces, the first disjoint piece would be the subgraph on the vertices $\{1,2,3,5\}$ and the second would be the subgraph on the vertices $\{4,6,7,8\}$.  We could not use a Type II move on the vertices $\{4,6,7,8\}$ because $5$ is connected to a vertex before $6$. Indeed, if we try to compare this perfect matching to the following perfect matching,
\[
\begin{tikzcd}
1\ar[dash, bend left = 30, rrrr]&2 \ar[dash, bend left = 30, r] &3 &4\ar[dash, bend left = 30,rrr]&5&6\ar[dash,bend left = 30,rr]&7&8
\end{tikzcd}
\]
both the intertwined pieces are comparable, but we cannot apply the Type II(a) move to the vertices $\{4,6,7,8\}$ because $5$ is connected to $1$. So these graphs are not actually comparable with our current moves.

So, although one can break up all the diagrams into a finite number of intertwined perfect matchings, it is possible that sometimes we could not use Type II moves. There are ways one could approach this issue, in particular by adding moves that allow one to work with the intertwined pieces separately. We will not include more details at the moment because we are only trying to motivate the importance of the subclass of intertwined perfect matchings.

At the very least, when one introduces new types of moves it should be easier to test whether the subposet of intertwined perfect matchings becomes noetherian. This could then be a good indicator that these moves are enough to make the poset $(\cM,\sqsubseteq)$ noetherian.

\subsection{Permutation Perspective.}
The subposet of intertwined perfect matchings is order isomorphic as a poset to the poset of permutations with the partial order induced by $\sqsubseteq$ (Proposition \ref{prop: matchingConnection}), so if one could prove that the corresponding class of permutations is well-quasi-ordered under allowable moves, this would imply the intertwined perfect matchings were well-quasi-ordered as well.  

Indeed, a byproduct of adding more moves seems to be forbidding certain patterns. For example, Type II moves forbid the permutation $\ell (\ell-1) \cdots 2 1$ from occurring in any element of an antichain that begins with a permutation of length $\ell$ because we can turn any permutation of length $\ell$ into this one, then use Type I moves to embed to this subpermutation. So one approach to proving noetherianity of at least the intertwined perfect matchings is introducing enough moves to forbid enough permutations so that the allowable permutations fall into a class that is known to be well-quasi-ordered.
%

Over the course of many years, researchers have developed various techniques for proving permutation classes are well-quasi-ordered. See the following papers for reference \cite{wqoPermGraph, ProfClass, GeomGridClass, GridClassFib, SmallPermClass, InflationsOfGeom, SimplePerms}.
We will not elaborate further, we merely point this out and include references because there is a rich and ongoing theory concerned with proving classes of permutations forbidding certain patterns are well-quasi-ordered. This paper suggests there is a connection between the noetherianity of tcas and this branch of combinatorics which is worth exploring further.

\subsection{Bruhat Order}

The Bruhat order is an extremely well studied partial order on permutations of the same size. Likewise, order isomorphism is a well studied partial order on permutations of any size. The Bruhat order is clearly well-quasi-ordered because the poset is finite. As we have discussed, the poset of all permutations with order isomorphism as the partial order is not well-quasi-ordered. One could ask if adding the Bruhat order is enough to make the set of all permutations well-quasi-ordered.

More explicitly, for two permutation $\sigma,\tau$ we say that $\sigma \preceq' \tau$ if either $\sigma \leq_{B} \tau$ or $\sigma \leq_o \tau$, where the first order is the Bruhat order and the second is an order isomorphism. We then say $\sigma \preceq \tau$ is there is some sequence of permutations $\sigma \preceq' \rho_1 \preceq' \rho_2 \preceq' \cdots \preceq' \rho_n \preceq' \tau$ relating $\sigma$ and $\tau$. The question is then whether $(\cP,\preceq)$ is well-quasi-ordered, where $\cP$ is the set of all permutations.

This is very similar to the poset that arises from studying $\Sym(\Sym^2(\bC^\infty))$. Indeed, the same proof technique as above can show that this partial order is also not noetherian. So introducing the Bruhat order is actually not enough to get noetherianity. The key property is Proposition \ref{prop: preserveCycles}, which still holds for this partial order. We will not prove this, but thought it worth mentioning because both of these partial orders have received a lot of attention. We think it is a worthwhile question, now closely tied to the noetherianity of $\Sym(\Sym^2(\bC^\infty))$, to ask what possible moves or relations one can introduce on permutations of the same size so that the order isomorphism partial order becomes noetherian.

Up to this point, researchers have been focused on finding subclasses of the poset of all permutations that are well-quasi-ordered under order isomorphism, but this heads in the other direction. Rather than shrinking the size of the set under consideration, we are asking what additional and natural moves one could add to expand the partial order to make the whole poset noetherian.

\end{document}